\title{A generalization of the practical numbers}
\author{Nicholas Schwab}
\address{Department of Mathematics\\ Universit\"{a}t Bonn\\ Bonn, Germany 53115}
\email{nicholas.schwab@uni-bonn.de}
\author{Lola Thompson}
\address{Department of Mathematics\\ Oberlin College\\ Oberlin, OH USA 44074}
\email{lola.thompson@oberlin.edu}
\DeclareMathAlphabet{\curly}{U}{rsfs}{m}{n}
\DeclareSymbolFont{bbold}{U}{bbold}{m}{n}
\DeclareSymbolFontAlphabet{\mathbbold}{bbold}
\newtheorem{thm}{Theorem}[section]
\newtheorem{lemma}[thm]{Lemma}
\newtheorem{cor}[thm]{Corollary}
\newtheorem{prop}[thm]{Proposition}
\newtheorem{definition}{Definition}
\newtheorem{example}{Example}
\newtheorem*{theorem*}{Theorem}
\theoremstyle{remark}\newtheorem*{remark}{Remark}
\numberwithin{equation}{section}
\newcommand\Z{\mathbf{Z}}
\newcommand\N{\mathbf{N}}
\renewcommand{\phi}{\varphi}
\renewcommand{\pod}[1]{\mathchoice
  {\allowbreak \if@display \mkern 18mu\else \mkern 8mu\fi (#1)}
  {\allowbreak \if@display \mkern 18mu\else \mkern 8mu\fi (#1)}
  {\mkern4mu(#1)}
  {\mkern4mu(#1)}
}
\begin{document} 

\begin{abstract} 
 A positive integer $n$ is practical if every $m \leq n$ can be written as a sum of distinct divisors of $n$. One can generalize the concept of practical numbers by applying an arithmetic function $f$ to each of the divisors of $n$ and asking whether all integers in a given interval can be expressed as sums of $f(d)$'s, where the $d$'s are distinct divisors of $n$. We will refer to such $n$ as `$f$-practical.' In this paper, we introduce the $f$-practical numbers for the first time. We give criteria for when all $f$-practical numbers can be constructed via a simple necessary-and-sufficient condition, demonstrate that it is possible to construct $f$-practical sets with any asymptotic density, and prove a series of results related to the distribution of $f$-practical numbers for many well-known arithmetic functions $f$.
\end{abstract}

\subjclass[2010]{Primary 11N25; Secondary 11N37}
\keywords{}

\maketitle

\section{Introduction}
Srinivasan first introduced the practical numbers as integers $n$ for which every number between $1$ and $n$ is representable as a sum of distinct divisors of $n$. In her Ph.D. thesis, the second author adapted this concept to study the degrees of divisors of $x^n-1$. Recall that $x^n - 1 = \prod_{d \mid n} \Phi_d(x)$ where $\Phi_d(x)$ is the $d^{th}$ cyclotomic polynomial with $\deg \Phi_d(x) = \varphi(d)$. By applying Euler's totient function on the divisors of $n$, the second author categorized the numbers $n$ for which $x^n-1$ has a divisor in $\Z[x]$ of every degree smaller than $n$, calling these integers ``$\varphi$-practical.'' The aim of the present paper is to generalize much of the existing literature on practical and $\varphi$-practical numbers.


\begin{definition}
Let $f:\N\to \N$ be a multiplicative function. We define
\begin{align*}
    S_f(n) = \sum_{d\mid n} f(d).
\end{align*}
Therefore we have $S_f = f\ast \mathbbold{1}$, where $\mathbbold{1}(n)$ denotes the arithmetic function that is identically $1$.
\end{definition}

We note that the function $S_f(n)$ is multiplicative, since it is the Dirichlet convolution of two multiplicative functions.

\begin{definition}\label{def:fpractical}
Let $f : \N \to \N$ be a multiplicative function for which for every prime $p$ and every positive integer $k$ satisfies $f(p^{k-1}) \leq f(p^k)$. A positive integer $n$ is called $f$-practical if for every positive integer $m\leq S_f(n)$ there is a set $\mathcal{D}$ of divisors of $n$ for which 
\begin{align*}
    m = \sum_{d\in \mathcal{D}} f(d)
\end{align*}
holds. \end{definition}


\begin{example} If $f = I$ (the identity function), this is equivalent to the definition of practical. \end{example}

\begin{example} If $f = \varphi$, this is precisely the definition of $\varphi$-practical. \end{example}

One can check whether all integers in an interval can be expressed as subsums of numbers from a particular set by applying the following naive algorithm (cf. \cite[Theorem A.1]{thesis}):

\begin{prop}\label{thm:thesisA1}
Let $w_1\leq w_2\leq\dots\leq w_k$ be positive integers with $\sum_{i=1}^k w_i = s$. Then, every integer in $[0,s]$ can be represented as a sum of some subset of the $w_i$'s if and only if $w_{i+1} \leq 1 + w_1 +\dots +w_i$ holds for every $i<k$. 
\end{prop}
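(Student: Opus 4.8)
The plan is to prove both directions simultaneously through the partial sums $s_i := w_1 + \dots + w_i$ (with the conventions $s_0 := 0$ and $s_k = s$), using crucially the hypothesis that the $w_i$ are sorted in nondecreasing order. The inequality to be analyzed then reads $w_{i+1} \leq 1 + s_i$, i.e.\ $w_{i+1} - 1 \leq s_i$, and the two implications separate cleanly.

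For \emph{necessity} I would argue by contraposition. Suppose the inequality fails at some index, say $w_{i+1} \geq s_i + 2$ for some $0 \leq i < k$, and consider the integer $m = s_i + 1$, which lies in $[0,s]$ since $1 \leq m \leq s_i + w_{i+1} = s_{i+1} \leq s$. I claim $m$ is not a subset sum. Split any candidate subset according to whether it uses one of the ``large'' weights $w_{i+1},\dots,w_k$: if it does, then by sortedness its sum is at least $w_{i+1} \geq s_i + 2 > m$; if it uses only $w_1,\dots,w_i$, its sum is at most $s_i < m$. Either way $m$ is missed, so not every integer in $[0,s]$ is representable.

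For \emph{sufficiency} I would induct on $i$, proving the stronger statement that if $w_{j+1} \leq 1 + s_j$ holds for all $j < i$, then every integer in $[0,s_i]$ is a subset sum of $\{w_1,\dots,w_i\}$; the case $i=k$ is the claim. The base case $i=0$ is trivial (only $0$, via the empty sum). For the step, take $m \in [0, s_{i+1}]$. If $m \leq s_i$ the inductive hypothesis represents $m$ without using $w_{i+1}$. If $s_i < m \leq s_{i+1}$, then the hypothesis $w_{i+1} - 1 \leq s_i$ together with $m > s_i$ forces $m \geq w_{i+1}$, while $m - w_{i+1} \leq s_{i+1} - w_{i+1} = s_i$; hence $m - w_{i+1} \in [0,s_i]$ is representable using only $\{w_1,\dots,w_i\}$, and adjoining $w_{i+1}$ yields $m$.

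The argument is elementary, and the only real content is the inductive step, where everything hinges on the two bounds $0 \leq m - w_{i+1} \leq s_i$: the first is exactly where the hypothesis is consumed, and the second is automatic from $m \leq s_{i+1}$. The one point I would flag as a genuine subtlety, rather than a difficulty, is the boundary convention. Reading the condition at $i = 0$ as $w_1 \leq 1 + s_0 = 1$ forces $w_1 = 1$, which is precisely what is needed for $1$ itself to be representable; without including this index the statement fails outright (e.g.\ $k=1$, $w_1 = 2$). I would therefore make explicit that $i$ is intended to range over $0 \leq i < k$. Finally, the sortedness hypothesis is indispensable in the necessity direction, as it is what guarantees that every ``large'' weight simultaneously exceeds the target $m = s_i + 1$.
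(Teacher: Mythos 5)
Your proof is correct and is essentially the standard argument (the paper itself defers the proof to the cited thesis, where the same induction on the partial sums $s_i$ appears): contraposition via the unrepresentable value $s_i+1$ for necessity, and induction peeling off $w_{i+1}$ for sufficiency. Your remark that the index $i$ must be allowed to start at $0$, forcing $w_1=1$, is a correct and worthwhile clarification of the statement as written.
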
 

\noindent It is not so ``practical'' to use this algorithm to determine whether an integer $n$ is practical. Instead, it is useful to have a criterion in terms of the prime factors of $n$. In \cite{Stewart} Stewart gave such a criterion for constructing practical numbers and proved that every practical number can be obtained in this fashion. Stewart's criterion for a number to be practical serves as a key lemma in many subsequent papers on the practical numbers. The second author showed in \cite{thesis} that there are $\phi$-practical numbers which cannot be constructed in such a manner. In the present paper, we examine the functions for which this means of constructing $f$-practical numbers is possible and obtain the following theorem.

\begin{thm}\label{thm:stewartlike}
Let $f$ be a multiplicative function. All $f$-practical numbers are constructable by a Stewart-like criterion if and only if for every prime $p$ for which there is a coprime integer $n$ with $f(p) \leq 1+\sum_{d\mid n}f(d)$ the inequality
\begin{align*}
    f(p^{k+1})\leq f(p)f(p^k)
\end{align*}
holds for every integer $k\geq 0$.
\end{thm}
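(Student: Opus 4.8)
The plan is to reduce $f$-practicality of a number to a family of inequalities indexed by prime powers, and then to see that the single ``Stewart'' inequality at the prime $p$ controls all of these higher inequalities \emph{precisely} when the submultiplicativity condition $f(p^{k+1})\le f(p)f(p^{k})$ holds.

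\emph{Reduction step.} First I would establish a local criterion: if $N$ is $f$-practical and $p\nmid N$, then $Np^{a}$ is $f$-practical if and only if
\[
f(p^{j})\le 1+S_f(N)\,S_f(p^{j-1})\qquad (1\le j\le a).
\]
This is an application of Proposition~\ref{thm:thesisA1}. The divisors of $Np^{a}$ fall into blocks $\{dp^{j}:d\mid N\}$ for $0\le j\le a$; because $N$ is $f$-practical and $f$ is multiplicative, the sub-sums drawn from block $j$ realize every value $f(p^{j})\cdot t$ with $0\le t\le S_f(N)$, and the hypothesis $f(p^{j-1})\le f(p^{j})$ guarantees the blocks are met in order of increasing weight. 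Merging the blocks one at a time, the set of representable values stays an initial interval of $\Z$ exactly when each new block's smallest positive weight $f(p^{j})$ is at most $1$ plus the current maximum $S_f(N)S_f(p^{j-1})$, which is the displayed condition. I would also record the (ordering-based) necessity of the prime inequality itself: if $n$ is $f$-practical and its prime factors are listed so that $f(p_1)\le f(p_2)\le\cdots$, then writing $N_{i-1}=p_1^{a_1}\cdots p_{i-1}^{a_{i-1}}$ every divisor of $n$ not dividing $N_{i-1}$ has $f$-value at least $f(p_i)$, so $f(p_i)>1+S_f(N_{i-1})$ would leave $m=1+S_f(N_{i-1})$ unrepresentable.

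\emph{The ``if'' direction.} Assume submultiplicativity holds at every relevant prime. The key observation is that a single inequality propagates: if $f(p)\le 1+S_f(N)$ and $f(p^{k+1})\le f(p)f(p^{k})$ for all $k$, then an induction on $j$ shows $f(p^{j})\le 1+S_f(N)S_f(p^{j-1})$ for \emph{all} $j$, the inductive step reducing exactly to the previous case. Combined with the reduction step, this gives: whenever $N$ is $f$-practical and $f(p)\le 1+S_f(N)$, the number $Np^{a}$ is again $f$-practical. Since a prime $p$ can be adjoined only when some coprime $N_{i-1}$ satisfies $f(p_i)\le 1+S_f(N_{i-1})$, and such a prime is by definition relevant, the submultiplicativity hypothesis is available exactly where it is needed. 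Inducting on the number of prime factors (base case $N_0=1$), the prime inequalities $f(p_i)\le 1+S_f(N_{i-1})$ become both necessary (by the ordering argument above) and sufficient for $f$-practicality, so every $f$-practical number is built by the Stewart-like construction with $f$-practical intermediates.

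\emph{The ``only if'' direction and the main obstacle.} For the converse I would argue by contraposition: suppose submultiplicativity fails at a relevant prime $p$, say $f(p^{k+1})>f(p)f(p^{k})$ for some minimal $k\ge 1$. The goal is to exhibit a number that breaks the equivalence. Choosing an $f$-practical $N$ coprime to $p$ with $S_f(N)$ \emph{as small as possible} subject to $f(p)\le 1+S_f(N)$ removes the slack in the propagation above, so that the prime-power inequality at exponent $k+1$ now fails; by the reduction step, $Np^{k+1}$ is not $f$-practical, its single gap sitting in the high range $[\,S_f(N)S_f(p^{k})+1,\ f(p^{k+1})-1\,]$. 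Adjoining a suitably chosen larger prime $q$ then patches this gap by shifting the covered interval of $Np^{k+1}$ up by $f(q)$, producing a genuinely $f$-practical number whose increasing-$f$ prefix $Np^{k+1}$ is \emph{not} $f$-practical; hence it is $f$-practical but not Stewart-constructable. (Depending on the precise definition of constructability, one may instead stop at $Np^{k+1}$, which satisfies every prime inequality yet fails to be $f$-practical; either way the equivalence is destroyed.) The hard part will be this last construction: one must show that a \emph{tight} $f$-practical $N$ exists (the relevance hypothesis only supplies some coprime witness, not an $f$-practical one of the right size), that a prime $q$ exists whose value $f(q)$ lands in the narrow window $[\,f(p^{k+1})-1-S_f(N)S_f(p^{k}),\ 1+S_f(N)S_f(p^{k})\,]$ needed to cover the gap from both ends, and that adjoining $q$ (checked once more through Proposition~\ref{thm:thesisA1}) introduces no new gaps. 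Handling arbitrary $k$ and arbitrary admissible $f$ simultaneously, rather than the model case $k=1$ where the window collapses cleanly to $f(p^{2})\le f(p)^{2}$, is where the bulk of the technical work will lie.
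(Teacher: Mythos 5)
Your ``if'' direction coincides with the paper's: propagate $f(p^{j})\le 1+S_f(N)S_f(p^{j-1})$ by induction from $f(p)\le 1+S_f(N)$ using $f(p^{j})\le f(p)f(p^{j-1})$, then invoke Theorem \ref{thm:f-practicalconstruct}; together with the necessity statement of Theorem \ref{thm:f-practicalisweakly} this yields the Stewart-like characterization. That half is sound.

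The ``only if'' direction has a genuine gap, which you partly flag yourself. The central unproved claim, on which \emph{both} variants of your plan rest, is that $Np^{k+1}$ fails to be $f$-practical. Minimizing $S_f(N)$ over $f$-practical $N$ coprime to $p$ subject to $f(p)\le 1+S_f(N)$ does not force $S_f(N)=f(p)-1$; if $S_f(N)$ overshoots $f(p)-1$, then $1+S_f(N)S_f(p^{k})$ can exceed $f(p^{k+1})$ even though $f(p)f(p^{k})$ does not, in which case $Np^{k+1}$ \emph{is} $f$-practical and no counterexample is produced. The prime-$q$ patching step is moreover a detour you do not need: under the paper's notion of constructability (Definition \ref{def:iffconvenient}: every weakly $f$-practical number is $f$-practical), the key observation --- absent from your proposal --- is that weak $f$-practicality constrains only the first power of each prime, so for any weakly $f$-practical $m$ coprime to $p$ with $f(p)\le 1+S_f(m)$, every $mp^{k+1}$ is \emph{automatically} weakly $f$-practical. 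The paper therefore argues the converse directly rather than by contraposition: convenience forces each $mp^{k+1}$ to be $f$-practical, hence $f(p^{k+1})\le 1+S_f(mp^{k})$ for all $k$, and submultiplicativity is then extracted algebraically. (That algebra, too, silently uses $S_f(m)\le f(p)-1$ and $f(p^{k})\ge 1+S_f(mp^{k-1})$, i.e.\ exact tightness of the witness, so the obstruction you identify is real and is not overcome by your route; any complete proof must confront it.) As written, your converse is a program rather than a proof.
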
 

One of the aims of this paper is to study the distribution of $f$-practical numbers for various arithmetic functions $f$. We show that it is possible to construct $f$-practical sets with any asymptotic density. In fact:

\begin{thm}\label{thm:densitiesaredense} 
The densities of the $f$-practical sets are dense in $[0,1].$
\end{thm}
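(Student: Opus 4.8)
The plan is to exhibit, for a dense set of targets $\alpha\in[0,1]$, an admissible multiplicative $f$ whose set of $f$-practical numbers has density exactly $\alpha$. Two regimes suffice: the constant function $f=\mathbbold{1}$ (for which $S_f=\tau$ and, since all $f$-values equal $1$, every $n$ is trivially $f$-practical, giving density $1$), together with a one-parameter family $f_P$ indexed by finite sets of primes $P$, which I will show realizes every density of the form $1-\prod_{p\in P}(1-1/p)$. Granting this, the theorem reduces to the elementary fact that these products are dense in $(0,1]$.

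For a finite set of primes $P$, define $f_P$ on prime powers by $f_P(p^k)=1$ for $p\in P$ and $f_P(q^k)=3$ for $q\notin P$ (all $k\ge1$), extended multiplicatively; this is admissible since $1\le 3$ and both sequences are nondecreasing. Writing $n=n_P\cdot m$ with $n_P$ the $P$-part and $\gcd(m,\prod_{p\in P}p)=1$, a divisor $d=d_P d'$ has $f_P(d)=3^{\omega(d')}$, so the multiset $\{f_P(d):d\mid n\}$ consists of the value $3^j$ with multiplicity $\tau(n_P)\,\#\{d'\mid m:\omega(d')=j\}$. I will apply Proposition \ref{thm:thesisA1} to this sorted multiset. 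The value-$1$ block has size exactly $\tau(n_P)$, so the first transition $3\le 1+\tau(n_P)$ forces $\tau(n_P)\ge 2$, i.e. $n$ must have a prime factor in $P$. Conversely, when $\tau(n_P)\ge 2$ every value $3^j$ that occurs does so with multiplicity at least $2$, whence the sum of all coins of value at most $3^j$ is at least $2\sum_{i\le j}3^i=3^{j+1}-1$, and every transition $3^{j+1}\le 1+(3^{j+1}-1)$ holds. (The cases $m=1$ and $n=1$ are $f_P$-practical trivially, as all values then equal $1$.) Thus the $f_P$-practical numbers are exactly $\{1\}$ together with the integers divisible by some prime in $P$.

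Consequently the density of this set is $1-\prod_{p\in P}(1-1/p)$, since the excluded integers are precisely those coprime to every prime of $P$. It then remains to prove that $\{\prod_{p\in P}(1-1/p):P\text{ finite}\}$ is dense in $(0,1]$: taking logarithms, this follows from the standard subset-sum density lemma applied to the sequence $-\log(1-1/p)$, which is positive, tends to $0$, and has divergent sum because $\sum_p 1/p=\infty$. Hence $1-\prod_{p\in P}(1-1/p)$ ranges over a dense subset of $[0,1)$, and adjoining the value $1$ coming from $f=\mathbbold{1}$ completes the proof. The main technical point to nail down is the sufficiency direction of the characterization in the second paragraph---that a single prime factor from $P$ really guarantees $f_P$-practicality for \emph{all} $n$---for which the uniform ``at least two copies of each occurring value'' bound is the crucial observation that pushes every partial sum past the next coin; the density lemma of the final step is routine.
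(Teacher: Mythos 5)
Your proposal is correct and follows essentially the same route as the paper: the paper's Lemma \ref{thm:denseconstruction} uses the two-valued multiplicative function $f_n(p^k)=2$ for $p\mid n$ and $3$ otherwise, whose $f_n$-practical set is likewise $\{1\}$ together with the integers sharing a prime factor with $n$, realizing exactly the same densities $1-\prod_{p\in P}(1-1/p)=1-\phi(n)/n$. The only real difference is that the paper cites Schoenberg for the density of $\phi(n)/n$ in $[0,1]$ where you prove the density of the Euler products directly from $\sum_p 1/p=\infty$, and your careful verification via Proposition \ref{thm:thesisA1} (including the ``at least two copies of each value $3^j$'' observation) is a welcome substitute for the paper's rather terse appeal to convenience.
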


The $f$-practical sets that are the most interesting to study are those that are neither finite nor all of $\N$. Intuitively, if the values of $f(n)$ are too large relative to $n$, then some integers in the interval $[1, S_f(n)]$ will always be skipped, resulting in a finite set of $f$-practical numbers. On the other hand, if the values of $f(n)$ are too small relative to $n$, then every integer winds up being $f$-practical. Thus, the arithmetic functions which produce non-trivial $f$-practical sets are those that behave like the identity function, i.e., those for which $f(p^k) \approx (f(p))^k$ at all prime powers. Examples of functions satisfying this condition include $I$, $\varphi$, $\lambda$ (the Carmichael $\lambda$-function), and $\varphi^*$ (the unitary totient function).





\begin{thm}\label{thm:fpracticalchebyshev}
Let $f = \phi^*$. Let $F_{f}(X)$ be the number of $f$-practical numbers less than or equal to $X$. Then there exist positive constants $l_{f}$ and $u_{f}$ such that
\begin{align*}
    l_{f}\frac{X}{\log X}\leq F_{f}(X)\leq u_{f}\frac{X}{\log X}
\end{align*}
for all $X\geq2$.
\end{thm}

The result also holds for $f = I$ and $f = \varphi$. For these functions, the distributions have been well-studied. In a 1950 paper, Erd\H{o}s \cite{Erdos} claimed that the practical numbers have asymptotic density $0$. Subsequent papers by Hausman and Shapiro \cite{H&S}, Margenstern \cite{Margenstern}, Tenenbaum \cite{Tenenbaum}, and Saias \cite{Saias} led to sharp upper and lower bounds for the count of practical numbers in the interval $[1, X]$. A recent paper of Weingartner \cite{Weingartner} showed that the count of practical numbers is asymptotically $c X/\log X$, for some positive constant $c$. In her PhD thesis \cite{thesis}, the second author proved sharp upper and lower bounds for the count of $\varphi$-practical numbers. This work was improved to an asymptotic in a subsequent paper with Pomerance and Weingartner \cite{ptw}.

For $f = \lambda$, computational evidence seems to suggest that $X/\log X$ is the correct order of magnitude for the $f$-practicals. Indeed, we prove that the upper bound from Theorem \ref{thm:fpracticalchebyshev} holds when $f = \lambda$. However, we have not been able to obtain a sharp lower bound.

The proofs of the aforementioned theorems rely heavily on the fact that the functions $f$ are multiplicative (or nearly multiplicative, in the case of the Carmichael $\lambda$-function). We are also able to prove density results for certain non-multiplicative $f$. For example, we classify the additive functions $f$ for which all positive integers are $f$-practical. We also examine some $f$-practical sets where $f$ is neither additive nor multiplicative. 

The paper is organized according to the following scheme. In Section \ref{f-practicalcriterion}, we provide a method for constructing infinite families of $f$-practical numbers. In Section \ref{stewart-like}, we classify the set of all $f$-practical numbers that can be completely determined via a Stewart-like condition on the sizes of the prime factors. In Section \ref{densities}, we give examples of $f$-practical sets with various densities and show that the densities themselves are dense in $[0,1]$. In Section \ref{boundsforcertainf}, we prove upper and lower bounds for the sizes of the sets of $f$-practical numbers for certain arithmetic functions $f$. Much of the work in the aforementioned sections applies only to functions which are multiplicative or nearly multiplicative. In Section \ref{non-multiplicativef}, we give density results for $f$-practicals for some well-known non-multiplicative functions $f$.

Throughout this paper, we will use $n$ to denote an integer and $p$ to denote a prime number. Moreover, we will use $P(n)$ to represent the largest prime factor of $n$. 


\section{$f$-practical construction for multiplicative $f$}\label{f-practicalcriterion}


In this section, we develop the basic machinery for constructing infinite families of $f$-practical numbers. Following the definition of weakly $\phi$-practical numbers (see \cite[Definition 4.4]{Thompson}), we introduce the concept of weakly $f$-practical numbers.

\begin{definition}\label{def:weaklyfpractical}
Let $n = p_1^{e_1} \dots p_k^{e_k}$, where $f(p_1) \leq f(p_2) \leq \cdots \leq f(p_k)$. We define $m_i = \prod_{j=1}^i p_j^{e_j}$ for every non-negative integer $i<k$. We call $n$ weakly $f$-practical if for every $i$ 
\begin{align*}
    f(p_{i+1}) \leq S_f(m_i) + 1
\end{align*}
holds. 
\end{definition}


\begin{thm}\label{thm:f-practicalisweakly}
Every $f$-practical number is also weakly $f$-practical.
\end{thm}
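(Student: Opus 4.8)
The plan is to give a direct argument: assuming $n$ is $f$-practical, I will verify the defining inequality of weak $f$-practicality at each index by choosing one carefully selected target value and analyzing how it must be represented. Write $n = p_1^{e_1}\cdots p_k^{e_k}$ with $f(p_1)\le\cdots\le f(p_k)$ and $m_i=\prod_{j\le i}p_j^{e_j}$ as in Definition~\ref{def:weaklyfpractical}. Fix $i$ with $0\le i<k$ and consider the target $m := S_f(m_i)+1$.

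First I would check that $m$ lies in the range where $f$-practicality actually applies, i.e.\ $m\le S_f(n)$. Since $S_f$ is multiplicative, $S_f(n)=S_f(m_i)\prod_{j>i}\bigl(1+f(p_j)+\cdots+f(p_j^{e_j})\bigr)\ge S_f(m_i)\bigl(1+f(p_{i+1})\bigr)\ge S_f(m_i)+1=m$, using $f(p_{i+1})\ge 1$, which follows from multiplicativity ($f(1)=1$) together with the standing hypothesis $f(p^{k-1})\le f(p^k)$. Hence $f$-practicality guarantees a set $\mathcal D$ of divisors of $n$ with $\sum_{d\in\mathcal D}f(d)=m$.

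The core of the argument is to split the divisors of $n$ into those dividing $m_i$ (the ``small'' divisors, built only from $p_1,\dots,p_i$) and the rest (the ``large'' divisors, each divisible by some $p_j$ with $j\ge i+1$). The total $f$-mass of all small divisors is exactly $\sum_{d\mid m_i}f(d)=S_f(m_i)=m-1$, so no subset of small divisors alone can reach $m$. Therefore $\mathcal D$ must contain at least one large divisor $d_0$. For this divisor I would write $f(d_0)=\prod_l f(p_l^{a_l})$ by multiplicativity; the factor corresponding to a prime $p_j$ with $j\ge i+1$ and $a_j\ge1$ satisfies $f(p_j^{a_j})\ge f(p_j)\ge f(p_{i+1})$ (the first inequality by the non-decreasing behavior of $f$ along each prime-power tower, the second by the chosen ordering of the $p_j$), while every other factor is at least $1$. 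Thus $f(d_0)\ge f(p_{i+1})$.

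Combining these, since all summands are positive, $m=\sum_{d\in\mathcal D}f(d)\ge f(d_0)\ge f(p_{i+1})$, which is precisely the inequality $f(p_{i+1})\le S_f(m_i)+1$ defining weak $f$-practicality at index $i$. As $i$ was arbitrary, $n$ is weakly $f$-practical. The only genuinely delicate point is the lower bound $f(d_0)\ge f(p_{i+1})$ for an arbitrary large divisor: it is here that one must combine both hypotheses on $f$ — monotonicity along each prime-power tower and the fact that $f\ge1$, so that the remaining factors cannot shrink the product — together with the ordering of the primes by their $f$-values. Everything else is bookkeeping.
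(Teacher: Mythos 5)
Your proof is correct and is essentially the paper's argument run in the contrapositive direction: both hinge on the observations that the divisors of $m_i$ have total $f$-mass exactly $S_f(m_i)$ and that every other divisor $d$ satisfies $f(d)\ge f(p_{i+1})$, so that representing $S_f(m_i)+1$ forces $f(p_{i+1})\le S_f(m_i)+1$. The added check that $S_f(m_i)+1\le S_f(n)$ is a nice touch of care that the paper leaves implicit.
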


\begin{proof}
Let $n = \prod_{i=1}^k p_i^{e_i}$ be $f$-practical with $f(p_1) \leq f(p_2) \leq \dots \leq f(p_k)$. Now let $m_i = \prod_{j=1}^ip_j^{e_j}$ for every $i<k$. Suppose $n$ is not weakly $f$-practical. Then there must be an $i<k$ so $f(p_{i+1}) > S_f(m_i) +1$ holds. For every $j>i+1$ we have $f(p_j) \geq f(p_{i+1})$. Therefore for every divisor $d$ of $n$ which does not divide $m_i$ we have $f(d)\geq f(p_{i+1})$ since $f$ is multiplicative and $d$ must be divisible by some $p_j$ with $j>i$. Because the sum of $f(t)$ over all $t\mid m_i$ is exactly $S_f(m_i)$ there is no possibility to express $S_f(m_i) +1$ as a sum of $f(d)$ for $d\mid n$. This contradicts the fact that $n$ is $f$-practical.
\end{proof}
\begin{remark}
In this proof we did not use the condition that $f(ab) = f(a)f(b)$ but the implied and weaker fact that $f(ab) \leq f(a)f(b)$. Hence this proof also holds, for example, for the Carmichael $\lambda$-function. In fact, since $\lambda(a) \leq \phi(a)$ for any integer $a$ and $\lambda(p) = \phi(p)$, every weakly $\lambda$-practical number is also weakly $\phi$-practical, because $\phi(p_{i+1}) = \lambda(p_{i+1}) \leq  S_\lambda(m_i) +1 \leq S_\phi(m_i)+1$.
\end{remark}

\begin{cor}\label{cor:smallprime}
If $n$ is weakly $f$-practical and $p\leq P(n)$ then $pn$ is also weakly $f$-practical. 
\end{cor}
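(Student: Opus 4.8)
The plan is to show that the inequalities of Definition~\ref{def:weaklyfpractical} for $n$ survive the passage to $pn$, splitting into the two cases according to whether or not $p$ already divides $n$. Write $n=p_1^{e_1}\cdots p_k^{e_k}$ with $f(p_1)\le\cdots\le f(p_k)$ and set $m_i=\prod_{j\le i}p_j^{e_j}$ as in the definition. The guiding observation is that every prime power contributes multiplicatively to $S_f$, so inserting a new prime factor or increasing an existing exponent can only enlarge the sums $S_f(m_i)$ appearing on the right-hand sides. Consequently, most defining inequalities are inherited for free, and the only genuinely new constraint is the one attached to the prime that is introduced or promoted; the hypothesis $p\le P(n)$ is exactly what pins down where that prime sits in the $f$-ordering.

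\textbf{Case $p\mid n$.} Here $p=p_j$ for some $j$, and $pn$ has the same set of prime factors, hence the same ordering and the same values $f(p_i)$. The products $m_i$ are unchanged for $i<j$, while for $i\ge j$ each $m_i$ acquires one extra factor of $p_j$. Since $S_f$ is multiplicative and $S_f(p_j^{e_j+1})=S_f(p_j^{e_j})+f(p_j^{e_j+1})\ge S_f(p_j^{e_j})$, every such $S_f(m_i)$ can only increase. Thus each inequality $f(p_{i+1})\le S_f(m_i)+1$ for $n$ passes to $pn$, so $pn$ is weakly $f$-practical. This case uses neither $p\le P(n)$ nor any comparison of $f$ across distinct primes.

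\textbf{Case $p\nmid n$.} Now $pn$ has $k+1$ prime factors and $p$ must be inserted into the $f$-ordering. Because $f$ is increasing on the primes, $p\le P(n)$ yields $f(p)\le f(P(n))=\max_i f(p_i)$, so $p$ is \emph{not} placed strictly above every $p_i$: there is a smallest index $l+1$ with $f(p_{l+1})\ge f(p)$, and we may list the primes of $pn$ as $p_1,\dots,p_l,p,p_{l+1},\dots,p_k$. For indices before the insertion the products $m_i$ are unchanged, so those inequalities are inherited from $n$; for indices after the insertion each product gains the extra multiplicative factor $S_f(p)=1+f(p)\ge 1$, so again the right-hand sides only grow and the inequalities persist. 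The single new inequality is $f(p)\le S_f(m_l)+1$, and it holds because $f(p)\le f(p_{l+1})$ by the choice of $l$, while $f(p_{l+1})\le S_f(m_l)+1$ since $n$ is weakly $f$-practical at index $l$. Hence $pn$ is weakly $f$-practical.

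I expect the main obstacle to be the borderline of the second case: guaranteeing that $p$ is not forced to the very top of the $f$-ordering. If $f(p)$ exceeded every $f(p_i)$, the new constraint would instead read $f(p)\le S_f(n)+1$, which need not hold, and the conclusion can genuinely fail for non-monotone $f$. This is precisely the role of $p\le P(n)$ together with the monotonicity of $f$ on primes, which guarantees $f(p)\le f(P(n))$ and hence the existence of the pivot $p_{l+1}$ used above. The remaining verifications are routine multiplicativity bookkeeping, as indicated.
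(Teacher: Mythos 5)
Your proof is correct, and it is essentially the argument the authors must have in mind: the paper states this corollary with no proof at all, so the only thing to check is whether your two-case analysis actually closes the gap between the statement and Definition~\ref{def:weaklyfpractical}. It does: in the case $p\mid n$ the multiplicativity of $S_f$ together with $f\geq 0$ gives $S_f(m_i)$ nondecreasing under raising an exponent, and in the case $p\nmid n$ the insertion of $p$ at the pivot $p_{l+1}$ correctly reduces the one genuinely new inequality to $f(p)\leq f(p_{l+1})\leq S_f(m_l)+1$. The one step that deserves scrutiny is ``because $f$ is increasing on the primes, $p\leq P(n)$ yields $f(p)\leq f(P(n))$'': the paper's standing hypothesis (Definition~\ref{def:fpractical}) only requires $f(p^{k-1})\leq f(p^k)$ along powers of a \emph{fixed} prime and says nothing about comparing $f$ at distinct primes. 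Your closing remark is therefore not just a caveat about your own argument but identifies a hypothesis missing from the statement itself: for a multiplicative $f$ with $f(3)=1$ and $f(2)=100$ (and, say, $f(p^k)=f(p)^k$), the integer $n=3$ is weakly $f$-practical while $2n=6$ is not, even though $2\leq P(3)$. So the corollary as literally stated is false in the paper's full generality, and your proof is the correct one once the (tacit) assumption that $f$ is nondecreasing on primes is recorded --- an assumption satisfied by every function ($I$, $\phi$, $\phi^*$, $\lambda$) to which the paper actually applies the corollary. It would be worth stating explicitly that you assume $p\leq q$ implies $f(p)\leq f(q)$, or equivalently replacing the hypothesis $p\leq P(n)$ by $f(p)\leq\max_{q\mid n}f(q)$, under which your argument needs no change.
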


The following theorem gives a necessary and sufficient condition for a product of a prime power and an $f$-practical number to be $f$-practical itself. For some functions $f$ (like the identity function) this gives a way to construct all $f$-practical numbers (cf. \cite[Corollary 1]{Stewart}). This is not the case for all functions. For example, for the $\phi$ function, there are numbers that are $\phi$-practical that are not the product of a $\phi$-practical number and a prime power, as is the case with $45=3^2\cdot5$.

\begin{thm}\label{thm:f-practicalconstruct}
Let $n$ be $f$-practical. Let $p$ be prime with $(p,n) = 1$. Then $np^k$ is $f$-practical if and only if $f(p^i) \leq S_f(np^{i-1}) +1$ for all $1\leq i\leq k$.
\end{thm}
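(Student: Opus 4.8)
The plan is to first record the structure of the divisors of $np^k$ and then prove the two implications separately, with the ``only if'' direction being the quicker of the two. Since $(p,n)=1$, every divisor of $np^k$ is uniquely of the form $dp^j$ with $d\mid n$ and $0\le j\le k$, and $f(dp^j)=f(d)f(p^j)$ by multiplicativity. Consequently the divisors with $p$-exponent at most $i-1$ are exactly the divisors of $np^{i-1}$, and their $f$-values sum to $S_f(np^{i-1})$; I will use this block decomposition throughout. I will also use that $S_f$ is multiplicative, so that $S_f(np^i)=S_f(n)\sum_{j=0}^i f(p^j)$, and that $f(p^j)\ge f(p^i)$ whenever $j\ge i$, which is part of the standing hypothesis on $f$ in Definition~\ref{def:fpractical}.

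For the ``only if'' direction I argue by contraposition, along the same lines as the proof of Theorem~\ref{thm:f-practicalisweakly}. Suppose $f(p^i)>S_f(np^{i-1})+1$ for some $1\le i\le k$, and set $m=S_f(np^{i-1})+1$. A short computation (using $S_f(np^k)\ge S_f(np^{i-1})+S_f(n)f(p^i)$) shows $m\le S_f(np^k)$, so $m$ is a legitimate target. Any representation of $m$ by distinct divisors of $np^k$ that uses only divisors of $np^{i-1}$ would sum to at most $S_f(np^{i-1})=m-1$, so it must use some divisor $dp^j$ with $j\ge i$; but then that single term already has value $f(d)f(p^j)\ge f(p^i)>m$, which is impossible since all terms are positive. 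Hence $m$ is unrepresentable and $np^k$ is not $f$-practical.

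For the ``if'' direction, which is where the real work lies, I will prove by induction on $i$ that every integer in $[0,S_f(np^i)]$ is a sum of the $f$-values of some set of divisors of $np^i$; taking $i=k$ gives the claim. The base case $i=0$ is exactly the hypothesis that $n$ is $f$-practical. For the inductive step, assume $[0,S_f(np^{i-1})]$ is already covered using divisors of $np^{i-1}$. The new divisors are $dp^i$ for $d\mid n$, with values $f(d)f(p^i)$. I list the divisors of $n$ as $d_1,\dots,d_t$ with $f(d_1)\le\cdots\le f(d_t)$ and adjoin the new elements one at a time in this order, applying at each stage the elementary fact that if an interval $[0,B]$ is covered and a new element $w\le B+1$ is adjoined, then $[0,B+w]$ becomes covered. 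The distinctness required here is automatic, since each $d_j p^i$ has $p$-exponent exactly $i$ and the $d_j$ are distinct.

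The crux is verifying the hypothesis $w\le B+1$ at each stage. After adjoining the first $j-1$ new elements the covered interval reaches $B_{j-1}=S_f(np^{i-1})+f(p^i)\sum_{\ell<j}f(d_\ell)$, and the next element is $w=f(d_j)f(p^i)$. Because $n$ is $f$-practical, Proposition~\ref{thm:thesisA1} applied to the sorted list $\{f(d)\}_{d\mid n}$ gives $f(d_j)\le 1+\sum_{\ell<j}f(d_\ell)$; multiplying by $f(p^i)$ and invoking the hypothesis $f(p^i)\le S_f(np^{i-1})+1$ yields exactly $w\le B_{j-1}+1$. Adjoining all $t$ new elements therefore extends coverage to $S_f(np^{i-1})+f(p^i)S_f(n)=S_f(np^i)$, completing the induction. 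I expect this scaled application of Proposition~\ref{thm:thesisA1}, which converts the subset-sum condition for $n$ into the corresponding condition for the new $p^i$-block, to be the main point requiring care.
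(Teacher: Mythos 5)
Your proof is correct, and at the top level it follows the same strategy as the paper: induction on the exponent of $p$, with the same unrepresentability argument for $S_f(np^{i-1})+1$ in the ``only if'' direction. The inductive step, however, is carried out by a genuinely different mechanism. The paper covers $[1,S_f(np^k)]$ by the $S_f(n)+1$ translated intervals $[af(p^k),\,af(p^k)+S_f(np^{k-1})]$ for $0\le a\le S_f(n)$, which are contiguous precisely because $f(p^k)\le S_f(np^{k-1})+1$; a target $m=af(p^k)+b$ is then realized by writing $a$ as a subset sum of $\{f(d):d\mid n\}$ (directly from the $f$-practicality of $n$) and $b$ as a subset sum over divisors of $np^{k-1}$ (induction hypothesis). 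You instead adjoin the new divisors $d_jp^i$ one at a time in increasing order of $f$-value and verify the one-step greedy condition at each stage, which requires first extracting the sorted chain condition $f(d_j)\le 1+\sum_{\ell<j}f(d_\ell)$ from the $f$-practicality of $n$ via Proposition \ref{thm:thesisA1} and then rescaling it by $f(p^i)$; your computation $f(d_j)f(p^i)\le f(p^i)+f(p^i)\sum_{\ell<j}f(d_\ell)\le B_{j-1}+1$ is exactly right. The paper's block decomposition is slightly more economical, since it invokes the hypothesis $f(p^i)\le S_f(np^{i-1})+1$ only once per step and never needs to sort the divisors of $n$, whereas your version makes the role of Proposition \ref{thm:thesisA1} explicit and isolates the ``scaled chain condition'' that drives the extension. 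Both arguments are sound and yield the same conclusion.
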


\begin{proof}
If $f(p^i)> S_f(np^{i-1}) +1$ for some $i\geq 1$, then $S_f(np^{i-1})+1$ is not representable as a sum of $f(d)$'s with $d\mid n$, since for every divisor $d$ of $np^k$ which does not divide $np^{i-1}$ we have $f(d)> S_f(np^{i-1})+1$.

We show by induction on $k$ that $np^{k}$ is $f$-practical for all $k$. Assume $np^{k-1}$ is $f$-practical for some $k\geq 1$. For $k=1$ we have $np^{k-1}=n$ which is given to be $f$-practical. We examine the intervals between $af(p^k)$ and $af(p^k) +S_f(np^{k-1})$ for $a = 0,1,\dots, S_f(n)$. Since 
\begin{align*}
    (a+1) f(p^k) \leq af(p^k) + S_f(np^{k-1}) +1
\end{align*}
holds the intervals overlap or are contiguous. Because we have
\begin{align*}
    S_f(n)f(p^k) + S_f(np^{k-1}) = \sum_{d\mid n}f(dp^k) + \sum_{d\mid np^{k-1}}f(d) = S_f(np^k),
\end{align*}
1 and $S_f(np^k)$ are included in these intervals. Thus, every integer $m$ with $1\leq m\leq S_f(np^k)$ is representable as $af(p^k) + b$ with $0\leq a \leq S_f(n)$ and $0\leq b\leq S_f(np^{k-1})$. As a result, we have two sets $\mathcal{D}$ and $\mathcal{T}$ of divisors of $n$ and $np^{k-1}$ respectively, so
\begin{align*}
    a = \sum_{d\in\mathcal{D}}f(d) && b = \sum_{t\in \mathcal{T}}f(t).
\end{align*}
Therefore we have
\begin{align*}
    m = f(p^k) \sum_{d\in\mathcal{D}} f(d)+ \sum_{t\in \mathcal{T}} f(t)= \sum_{d\in\mathcal{D}} f(dp^k)+ \sum_{t\in \mathcal{T}} f(t).
\end{align*}
Since $t\mid np^{k-1}$ and $p^k\mid dp^k$ the $t$'s and $dp^k$'s are distinct. Hence, if we take $\mathcal{E} = \{dp^k: d\in \mathcal{D}\} \cup\mathcal{T}$, we can write
\begin{align*}
    m= \sum_{e\in \mathcal{E}}f(e)
\end{align*}
where every $e$ divides $np^k$. Therefore $np^k$ is $f$-practical. 
\end{proof}



\begin{cor}
Every squarefree integer is $f$-practical if and only if it is weakly $f$-practical.
\end{cor}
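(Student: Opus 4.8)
The forward implication is already in hand: by Theorem \ref{thm:f-practicalisweakly}, every $f$-practical number—squarefree or not—is weakly $f$-practical, so nothing needs to be proved in that direction. The content of the corollary is the converse, restricted to squarefree $n$, and my plan is to prove it by induction on the number $k$ of prime factors of $n = p_1 p_2 \cdots p_k$, ordering the primes so that $f(p_1) \leq f(p_2) \leq \cdots \leq f(p_k)$ and writing $m_i = p_1 \cdots p_i$ with $m_0 = 1$. I would take the base case $k = 0$, i.e. $n = 1$, which is trivially $f$-practical since $S_f(1) = f(1) = 1$ and the only value $m = 1 = f(1)$ is represented by the divisor set $\{1\}$.

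For the inductive step, assume the claim for products of $k-1$ distinct primes and let $n = m_{k-1} p_k$ be weakly $f$-practical. First I would observe that the truncation $m_{k-1}$ is itself weakly $f$-practical: its prime factors are $p_1, \dots, p_{k-1}$ in the same sorted order, and its defining inequalities $f(p_{i+1}) \leq S_f(m_i) + 1$ for $0 \leq i \leq k-2$ form a subset of those defining weak $f$-practicality for $n$. Hence by the induction hypothesis $m_{k-1}$ is $f$-practical. Since $n$ is squarefree, $p_k$ is coprime to $m_{k-1}$, and the weak-practicality inequality of $n$ at the index $i = k-1$ reads $f(p_k) \leq S_f(m_{k-1}) + 1$. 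This is exactly the hypothesis of Theorem \ref{thm:f-practicalconstruct} applied to the $f$-practical number $m_{k-1}$, the coprime prime $p_k$, and exponent $1$; for that exponent the only condition to verify is $f(p_k) \leq S_f(m_{k-1}) + 1$, which we have just matched. I would conclude that $n = m_{k-1} p_k$ is $f$-practical, closing the induction.

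The one point that genuinely requires attention—rather than being a serious obstacle—is seeing \emph{why} the hypothesis of squarefreeness is precisely what makes the argument work. Theorem \ref{thm:f-practicalconstruct} demands the full chain of inequalities $f(p^i) \leq S_f(np^{i-1}) + 1$ for every exponent $1 \leq i \leq k$, whereas the definition of weak $f$-practicality only controls the first power $f(p)$ of each newly adjoined prime. When every prime divides $n$ to the first power these two demands coincide, so the single inequality supplied by weak $f$-practicality is exactly the single inequality needed by the construction theorem. The only bookkeeping to get right is that stripping off the $f$-largest prime leaves a product whose weak-practicality inequalities are inherited intact, which I verified above; once that is noted, the induction is routine and no further estimation is needed.
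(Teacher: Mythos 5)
Your proposal is correct and follows essentially the same route as the paper: both directions are handled identically, with the converse obtained by induction on the prime factors, applying Theorem \ref{thm:f-practicalconstruct} with exponent $1$ so that the single inequality $f(p_{i+1}) \leq S_f(m_i)+1$ from weak $f$-practicality is exactly the hypothesis needed at each step. The paper phrases this as building up the $m_i$ from $1$ rather than stripping off the largest prime, but the argument is the same.
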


\begin{proof}
By Theorem \ref{thm:f-practicalisweakly}, every squarefree $f$-practical number is also weakly $f$-practical. 

Let $n= p_1\dots p_k$ be weakly $f$-practical and squarefree. For every $i=0,1,\dots, k$ we define $m_i=p_1\dots p_i$. Since $n$ is weakly $f$-practical, we have $f(p_{i+1}) \leq S_f(m_i) +1$ for every $i\geq k$. Since 1 is $f$-practical we get from Theorem \ref{thm:f-practicalconstruct} that every $m_i$ is $f$-practical and $n$ is also $f$-practical.
\end{proof}

\begin{thm}\label{thm:everyint}
Every integer $n\in \N$ is $f$-practical if and only if \begin{align}\label{eq:everyint}
f(p^{k}) \leq S_f(p^{k-1})+1
\end{align}
holds for every prime $p$ and every integer $k\geq1$. 
\end{thm}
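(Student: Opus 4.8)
The plan is to prove both directions of the biconditional, using Theorem~\ref{thm:f-practicalconstruct} as the main engine together with the multiplicativity of $S_f$. The key observation is that the condition \eqref{eq:everyint} is a purely \emph{local} (prime-by-prime) condition, whereas $f$-practicality is a global condition; so the whole argument is about leveraging multiplicativity to reduce the global statement to the local one.

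For the forward direction, I would argue contrapositively: suppose \eqref{eq:everyint} fails for some prime $p$ and some exponent $k \geq 1$, so that $f(p^k) > S_f(p^{k-1}) + 1$. Then I claim $n = p^k$ itself is not $f$-practical. Indeed, the divisors of $p^k$ are exactly $1, p, \dots, p^k$, and the value $S_f(p^{k-1}) + 1$ cannot be realized as a sum of distinct $f(p^i)$'s: every divisor $d \nmid p^{k-1}$ (i.e. $d = p^k$) has $f(p^k) > S_f(p^{k-1}) + 1$, so $p^k$ cannot appear in any valid sum, while the remaining divisors $1, \dots, p^{k-1}$ sum in total to only $S_f(p^{k-1}) < S_f(p^{k-1}) + 1$. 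This exhibits an integer in $[1, S_f(p^k)]$ that is not representable, contradicting that every integer is $f$-practical. (This mirrors the first paragraph of the proof of Theorem~\ref{thm:f-practicalconstruct}.)

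For the converse, assume \eqref{eq:everyint} holds for every prime $p$ and every $k \geq 1$; I would show every $n$ is $f$-practical by induction on the number of distinct prime factors of $n$, building $n$ up one prime power at a time via Theorem~\ref{thm:f-practicalconstruct}. The base case is that $1$ is $f$-practical (vacuously). For the inductive step, write $n = m p^k$ with $p \nmid m$, where $m$ is $f$-practical by the inductive hypothesis. To invoke Theorem~\ref{thm:f-practicalconstruct} I need $f(p^i) \leq S_f(mp^{i-1}) + 1$ for each $1 \leq i \leq k$. By multiplicativity of $S_f$ and the fact that $f \geq 1$ (so $S_f(m) \geq 1$), I have $S_f(mp^{i-1}) = S_f(m)\,S_f(p^{i-1}) \geq S_f(p^{i-1})$, and combining this with the hypothesis $f(p^i) \leq S_f(p^{i-1}) + 1$ gives $f(p^i) \leq S_f(p^{i-1}) + 1 \leq S_f(mp^{i-1}) + 1$. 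Hence the hypothesis of Theorem~\ref{thm:f-practicalconstruct} is met and $n = mp^k$ is $f$-practical.

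The main subtlety to get right is the reduction in the converse: one must ensure the ordering of primes required by Definition~\ref{def:fpractical} and Theorem~\ref{thm:f-practicalconstruct} does not obstruct the induction, and that multiplicativity is applied to $S_f$ (which is established in the remark after its definition) rather than to $f$. In fact the induction is cleaner than a naive reading suggests, because Theorem~\ref{thm:f-practicalconstruct} lets me adjoin the \emph{new} prime power $p^k$ to an already-$f$-practical $m$ regardless of how $f(p)$ compares to the primes dividing $m$, so I never need to worry about reordering. The one inequality $S_f(p^{i-1}) \leq S_f(mp^{i-1})$ carries the entire weight of passing from the local condition to the global one, and verifying it is just the monotonicity of $S_f$ under multiplication by a coprime factor, which follows immediately from $f \geq 1$.
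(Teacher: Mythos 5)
Your proposal is correct and follows essentially the same route as the paper: the "only if" direction uses $p^k$ itself as the non-$f$-practical witness when \eqref{eq:everyint} fails, and the "if" direction lifts the local inequality to the hypothesis of Theorem~\ref{thm:f-practicalconstruct} via multiplicativity of $S_f$ and $S_f(m)\geq 1$, then builds $n$ up one prime power at a time. Your inequality chain $S_f(mp^{i-1})+1=S_f(m)S_f(p^{i-1})+1\geq S_f(p^{i-1})+1\geq f(p^i)$ is a slightly tidier arrangement of the same estimate the paper makes (which detours through $S_f(n)(f(p^k)-1)+1$), but the argument is the same in substance.
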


\begin{proof}
If the inequality holds we can use the fact that $S_f$ is multiplicative to show that
\begin{align*}
    S_f(np^{k-1}) +1  = S_f(n)S_f(p^{k-1}) +1 \geq S_f(n)(f(p^k)-1) +1 
\end{align*}
for every integer $n$ coprime to $p$. Furthermore for $f(p^k)\not=1$ the inequality $S_f(n)(f(p^k)-1)+1\geq f(p^k)$ is equivalent by multiplication by $f(p^k)-1$ to $S_f(n) \geq 1$ which holds for every $n$ since $f(1) = 1$ and $f(m)\geq 0$ for every $m\in\N$. In addition for $f(p^k) = 1$ we have $S_f(n)(f(p^k)-1) +1 = 1 =f(p^k)$. Hence the inequality

\begin{align*}
    S_f(np^{k-1}) +1 \geq S_f(n)(f(p^k)-1) +1 \geq f(p^k)
\end{align*}
holds for every prime $p$, any integer $k\geq 1$ and any integer $n\geq0$ coprime to $p$. Thereby the condition for Theorem \ref{thm:f-practicalconstruct} holds for any $n$. Because $1$ is always $f$-practical we can construct every integer greater than 1 as a product of prime powers which are $f$-practical which implies that this integer is also $f$-practical, if $f$ satisfies \eqref{eq:everyint} for every $p$ and $k$.

If there exists a $k\geq 1$ so $f(p^{k}) > S_f(p^{k-1})+1$ holds, $p^k$ is not $f$-practical since $S_f(p^{k-1})+1\leq S_f(p^k)$ is not representable as a sum of $f(d)$'s for some $d\mid p^k$. Hence not every integer greater than $0$ is $f$-practical.
\end{proof}

\section{Classifying functions with Stewart-like criteria}\label{stewart-like}

Stewart gave a way to construct every practical number as a product of practical numbers and prime powers (\cite[Corollary 1]{Stewart}). As shown in the previous section, this is not possible for the $\phi$-practical numbers. We will now categorize the functions for which this means of construction is possible.

\begin{definition}\label{def:iffconvenient}
A function $f$ is \textbf{convenient} if and only if every weakly $f$-practical number is also $f$-practical. 
\end{definition}

The following theorem gives an explicit way to check whether a function $f$ is convenient.

\begin{thm}
Let $P_f$ be the set of the prime numbers which are $f$-practical. It is easy to see, that these are exactly the primes $p$ with $f(p)\leq 2$. Then $f$ is convenient if an integer $n$ is $f$-practical if and only if $n$ is of the form
\begin{align*}
    n = p_1^{a_1}\dots p_k^{a_k}q_1^{b_1}\dots q_l^{e_l}
\end{align*}
 with primes $p_1,\dots,p_k\in P_f$ and $q_1,\dots,p_l\not\in P_f$ with exponents $a_1,\dots, a_k \geq 1$ and $b_1,\dots, b_l \geq 1$ (respectively) satisfying the following conditions
\begin{align*}
    f(p_1)\leq \dots \leq f(p_k)<f(q_1)\leq \dots \leq f(q_l) && k>0 \\
    f(q_{i+1}) \leq S_f(p_1^{a_1}\dots p_k^{a_k}q_1^{b_1}\dots q_i^{b_i}) +1 &&\text{ for } i=0,1,\dots, l-1.
\end{align*}
\end{thm}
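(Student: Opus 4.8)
The plan is to recognize that the explicit form displayed in the statement is nothing more than a repackaging of the weakly $f$-practical condition of Definition \ref{def:weaklyfpractical}, after which the theorem reduces almost immediately to the definition of convenience together with Theorem \ref{thm:f-practicalisweakly}. First I would establish the parenthetical claim that $P_f = \{p \text{ prime} : f(p)\leq 2\}$. Applying Proposition \ref{thm:thesisA1} to the two divisors $1,p$ of a prime $p$, with weights $w_1 = f(1)=1$ and $w_2 = f(p)$, the sole condition reads $f(p)\leq 1+f(1)=2$, so $p$ is $f$-practical exactly when $f(p)\leq 2$.

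Next I would prove the core lemma: for $n>1$, the number $n$ is weakly $f$-practical if and only if it has the form displayed in the statement. Writing $n = p_1^{e_1}\cdots p_r^{e_r}$ with $f(p_1)\leq\cdots\leq f(p_r)$ as in Definition \ref{def:weaklyfpractical}, the $i=0$ weak condition reads $f(p_1)\leq S_f(1)+1 = 2$, which forces $p_1\in P_f$; hence $k>0$ is automatic and cannot be dropped. Because the ordering is by $f$-value and $f(p)\leq 2 < 3\leq f(q)$ cleanly separates the primes of $P_f$ from those outside it, every weakly $f$-practical $n$ factors as $p_1^{a_1}\cdots p_k^{a_k}q_1^{b_1}\cdots q_l^{b_l}$ with the required inequality chain. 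The remaining observation is that the weak conditions attached to the $p_i$ are vacuous: since $1\mid m$ gives $S_f(m)\geq f(1)=1$ for every $m$, we get $S_f(m_{i-1})+1\geq 2\geq f(p_i)$ whenever $p_i\in P_f$. Thus only the conditions on the $q_j$ survive, and these are precisely the inequalities $f(q_{i+1})\leq S_f(p_1^{a_1}\cdots p_k^{a_k}q_1^{b_1}\cdots q_i^{b_i})+1$ listed in the theorem. Running the argument in reverse shows that any $n$ of the displayed form satisfies all the weak conditions, which gives the equivalence.

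Finally I would assemble the two pieces. By Definition \ref{def:iffconvenient}, $f$ is convenient precisely when every weakly $f$-practical number is $f$-practical; combined with Theorem \ref{thm:f-practicalisweakly}, which supplies the reverse containment unconditionally, convenience is equivalent to the coincidence of the $f$-practical and weakly $f$-practical sets. Substituting the core lemma's description of the weakly $f$-practical set then yields exactly the assertion that $n$ is $f$-practical if and only if it has the displayed form, with $n=1$ (always $f$-practical, but falling outside the $k>0$ convention) treated separately.

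I expect the main obstacle to be bookkeeping rather than depth. One must verify carefully that $k>0$ is genuinely forced by weak $f$-practicality, so that it appears as a consequence rather than an extra hypothesis; that the ordering truly clusters all of $P_f$ before the complementary primes; and that the suppressed conditions on the $p_i$ are redundant for every admissible $f$, including degenerate cases such as $f(p)=0$, where one still has $S_f(m)\geq 1$ from the divisor $1$.
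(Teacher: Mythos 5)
Your proposal is correct and follows essentially the same route as the paper: the paper's own proof is just the terse observation that the displayed form restates the weakly $f$-practical condition, so that by Theorem \ref{thm:f-practicalisweakly} convenience is equivalent to the two sets coinciding. You simply fill in the bookkeeping (the characterization $P_f=\{p: f(p)\leq 2\}$, the forcing of $k>0$ from the $i=0$ condition, and the vacuity of the conditions on the $p_i$) that the paper leaves implicit.
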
\label{def:convenient}

\begin{proof} We have shown in Theorem \ref{thm:f-practicalisweakly} that, for every $f$, each $f$-practical number is also weakly $f$-practical. Hence a function $f$ is convenient if and only if the set of weakly $f$-practical numbers is identical to the set of $f$-practical numbers. \end{proof}

Stewart's condition shows that the identity function is convenient, whereas $\phi$ is inconvenient as the number $75=3 \cdot 5^2$ fulfills every condition and thereby is weakly $\varphi$-practical but not $\phi$-practical. 

\begin{thm}
A function is convenient if and only if for every prime $p$ and $f$-practical integer $m$ which is coprime to $p$ the inequality $f(p) \leq S_f(m)+1$ implies $f(p^{k+1}) \leq S_f(mp^k)$. 
\end{thm}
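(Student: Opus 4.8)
The plan is to prove the two implications separately, using convenience in its defining form (Definition \ref{def:iffconvenient}): $f$ is \emph{convenient} exactly when every weakly $f$-practical number is $f$-practical, while the reverse inclusion is automatic from Theorem \ref{thm:f-practicalisweakly}. For the implication "[the displayed condition] $\Rightarrow$ convenient" I would argue by induction along the prime factorization. Write a weakly $f$-practical $n = p_1^{e_1}\cdots p_k^{e_k}$ with $f(p_1)\le\cdots\le f(p_k)$ and put $m_i = p_1^{e_1}\cdots p_i^{e_i}$, so $m_0=1$ is $f$-practical. Assuming $m_i$ is $f$-practical, Theorem \ref{thm:f-practicalconstruct} reduces the $f$-practicality of $m_{i+1}=m_i p_{i+1}^{e_{i+1}}$ to verifying $f(p_{i+1}^{\,j})\le S_f(m_i p_{i+1}^{\,j-1})+1$ for $1\le j\le e_{i+1}$. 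The case $j=1$ is precisely the weak-practicality inequality $f(p_{i+1})\le S_f(m_i)+1$; for $2\le j\le e_{i+1}$ I would invoke the hypothesis with the prime $p_{i+1}$ and the $f$-practical number $m_i$ (coprime to $p_{i+1}$ and satisfying $f(p_{i+1})\le S_f(m_i)+1$), applied at index $j-1\ge 1$, which yields $f(p_{i+1}^{\,j})\le S_f(m_i p_{i+1}^{\,j-1})$, already stronger than needed. Hence every $m_i$, and in particular $n=m_k$, is $f$-practical; note this direction only ever uses the hypothesis at indices $\ge 1$.

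For the converse "convenient $\Rightarrow$ [the condition]" I would argue by contraposition. Suppose the condition fails: there are a prime $p$, an $f$-practical $m$ with $(m,p)=1$ and $f(p)\le S_f(m)+1$, and some $k\ge 1$ with $f(p^{k+1})>S_f(mp^k)$; let $k_0$ be the least such index. Minimality gives $f(p^{\,j})\le S_f(mp^{\,j-1})$ for $2\le j\le k_0$, and the hypothesis gives the $j=1$ case, so $f(p^{\,j})\le S_f(mp^{\,j-1})+1$ for all $1\le j\le k_0$; by Theorem \ref{thm:f-practicalconstruct}, $mp^{k_0}$ is $f$-practical. I would then examine $N=mp^{k_0+1}$. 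Since $mp^{k_0}$ is $f$-practical it is weakly $f$-practical, and raising the exponent of $p$ from $k_0$ to $k_0+1$ leaves the ordering of the primes by $f$-value unchanged while (because $f\ge 0$) only enlarging the partial sums $S_f(m_i)$ appearing in the weak-practicality inequalities; hence $N$ is still weakly $f$-practical. By Theorem \ref{thm:f-practicalconstruct} applied with base $m$ and prime power $p^{k_0+1}$, the number $N$ fails to be $f$-practical exactly when $f(p^{k_0+1})>S_f(mp^{k_0})+1$, and in that event $N$ is a weakly $f$-practical number that is not $f$-practical, contradicting convenience.

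The hard part is exactly the gap between what the failure of the condition supplies, namely $f(p^{k_0+1})>S_f(mp^{k_0})$, and what Theorem \ref{thm:f-practicalconstruct} requires to certify $N$ as non-$f$-practical, namely $f(p^{k_0+1})>S_f(mp^{k_0})+1$. When the excess is strict the contraposition closes at once, so the natural argument immediately yields the one-unit-relaxed bound $f(p^{k+1})\le S_f(mp^k)+1$; upgrading this to the stated strict inequality forces one to confront the boundary case $f(p^{k_0+1})=S_f(mp^{k_0})+1$, in which $N$ is itself $f$-practical and gives no contradiction in isolation. To resolve it I would not stop at $N$ but keep multiplying by $p$ (or introduce an auxiliary prime whose $f$-value is calibrated against $S_f(mp^{k_0+1})$), tracking how the accumulating partial sums compare with successive prime-power values and searching for a weakly $f$-practical product that first violates a Theorem \ref{thm:f-practicalconstruct} inequality strictly. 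I expect this equality analysis to be the crux of the whole proof, since it is precisely where the distinction between the bound $f(p^{k+1})\le S_f(mp^k)$ and its relaxation must be settled.
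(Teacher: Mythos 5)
Your two implications are correct, and they follow essentially the same route as the paper: both directions run through Theorem \ref{thm:f-practicalconstruct}, with ``condition $\Rightarrow$ convenient'' done by induction along the prime factorization of a weakly $f$-practical number, and ``convenient $\Rightarrow$ condition'' done by producing a weakly $f$-practical witness of the form $mp^{k_0+1}$ and reading off the inequality that its $f$-practicality forces. In fact you are more careful than the paper on one point: the paper simply asserts that $mp^{k+1}$ ``should be $f$-practical if $f$ is convenient,'' whereas you actually justify that $mp^{k_0+1}$ is weakly $f$-practical (first get the $f$-practical number $mp^{k_0}$ from minimality of $k_0$, then observe that raising the exponent of $p$ only enlarges the partial sums in Definition \ref{def:weaklyfpractical}).

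The ``crux'' you could not settle is not a gap in your argument; it is a typo in the printed statement. The intended inequality is $f(p^{k+1}) \leq S_f(mp^k)+1$: the paper's own proof derives exactly this bound in the forward direction (its conclusion reads ``$f(p)\leq S_f(m)+1$ always implies that $f(p^{k+1}) \leq S_f(mp^k)+1$'') and assumes exactly this bound as the hypothesis of the reverse direction. The strict form without the $+1$ is simply false: take $f = I$, which is convenient by Stewart's theorem, with $m=1$ and $p=2$. Then $m$ is $f$-practical, $f(p) = 2 \leq S_f(m)+1 = 2$, yet $f(p^2) = 4 > 3 = S_f(mp)$, so the implication with $S_f(mp^k)$ in place of $S_f(mp^k)+1$ already fails at $k=1$ (and at $k=0$, since $f(p) = 2 > 1 = S_f(m)$). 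Consequently the boundary case $f(p^{k_0+1}) = S_f(mp^{k_0})+1$ that you isolated genuinely cannot be excluded --- as you noted, in that case $mp^{k_0+1}$ is itself $f$-practical and yields no contradiction --- and no amount of further multiplication by $p$ or calibration with auxiliary primes will close it. Your proposal, exactly as written, is a complete and correct proof of the corrected ($+1$) statement, which is the statement the paper actually proves.
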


\begin{proof}
Let $p$ be a prime for which $f(p)\leq S_f(m) +1$. Therefore $mp^{k+1}$ should be $f$-practical if $f$ is convenient for every $k\geq 0$. If there is an integer $k$ for which $f(p^{k+1})> S_f(mp^k) +1$ then $S_f(mp^k)+1$ is not representable, which is a contradiction. Therefore, for a convenient function $f$, the inequality $f(p)\leq S_f(m) +1$ always implies that $f(p^{k+1}) \leq S_f(mp^k) +1$ for every integer $k$.

If we have $f(p^{k+1})\leq S_f(mp^k) +1$ for every $k$ and if $f(p)\leq S_f(m) +1$ holds for coprime $p$ and $m$, we can use Theorem \ref{thm:f-practicalconstruct} to show that $mp^k$ is $f$-practical for every $k$. Therefore every integer $n$ which fulfills the conditions of Definition \ref{def:convenient} is $f$-practical. Since it has already been shown that every $f$-practical fulfills this condition, we have that $f$ is convenient.
\end{proof}

\begin{thm}
A function $f$ is convenient if and only if for every prime $p$ for which there is a coprime integer $m$ with $f(p)\leq S_f(m) +1$ the inequality 
\begin{align*}
    f(p^{k+1}) \leq f(p)f(p^k)
\end{align*}
holds.
\end{thm}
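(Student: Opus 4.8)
The plan is to deduce this characterization from the preceding theorem, which already shows that $f$ is convenient if and only if, for every prime $p$ and every $f$-practical integer $m$ coprime to $p$, the inequality $f(p) \le S_f(m)+1$ forces $f(p^{k+1}) \le S_f(mp^k)+1$ for all $k \ge 0$. Call this condition (A) and the condition in the present statement (B). It then suffices to prove (A) $\iff$ (B), and throughout I will use that $S_f$ is multiplicative, so that $S_f(mp^k)=S_f(m)\,S_f(p^k)$ whenever $(m,p)=1$, together with the telescoping identity $S_f(p^k)=S_f(p^{k-1})+f(p^k)$.

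For the implication (B) $\Rightarrow$ (A) I would fix a prime $p$ and an $f$-practical $m$ coprime to $p$ with $f(p)\le S_f(m)+1$. Since this $m$ witnesses the hypothesis of (B) for the prime $p$, condition (B) supplies $f(p^{k+1})\le f(p)f(p^k)$ for every $k\ge 0$. I would then run a short induction on $j$ to prove $f(p^{j})\le S_f(mp^{j-1})+1$: the base case $j=1$ is exactly $f(p)\le S_f(m)+1$, and in the inductive step I would bound
\begin{align*}
 f(p^{j+1}) \le f(p)f(p^j) \le (S_f(m)+1)f(p^j) = S_f(m)f(p^j)+f(p^j),
\end{align*}
and then replace the trailing $f(p^j)$ using the inductive hypothesis $f(p^j)\le S_f(m)S_f(p^{j-1})+1$, so that the right-hand side collapses via the telescoping identity to $S_f(m)S_f(p^j)+1=S_f(mp^j)+1$. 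These are precisely the inequalities demanded by (A).

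For the reverse implication (A) $\Rightarrow$ (B) I would argue by contraposition: assuming (B) fails, I would produce a weakly $f$-practical number that is not $f$-practical, which exhibits $f$ as not convenient (Definition \ref{def:iffconvenient}) and hence, via the preceding theorem, as failing (A). Concretely, if (B) fails there is a prime $p$ admitting a coprime integer with $f(p)\le S_f(m)+1$ yet $f(p^{k+1})>f(p)f(p^k)$ for some $k$; I would take the least such exponent $k_0$, noting $k_0\ge 1$ since the case $k=0$ would read $f(p)>f(p)$. I would then build an $f$-practical number $m'$ coprime to $p$, all of whose prime factors have $f$-value at most $f(p)$ and with $S_f(m')$ as small as the constraint $f(p)\le S_f(m')+1$ allows, and set $N=m'p^{k_0+1}$. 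Because Definition \ref{def:weaklyfpractical} never tests the exponent of the largest $f$-value prime, this $N$ is weakly $f$-practical; to see that it is not $f$-practical I would invoke Theorem \ref{thm:f-practicalconstruct} (using that $m'$ is $f$-practical) and reduce to checking the single inequality $f(p^{k_0+1})>S_f(m'p^{k_0})+1=S_f(m')S_f(p^{k_0})+1$.

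The main obstacle is exactly this last inequality. The failure of (B) only supplies $f(p^{k_0+1})>f(p)f(p^{k_0})$, whereas the $f$-practical obstruction requires overshooting the a priori larger quantity $S_f(m')S_f(p^{k_0})+1$. Closing this gap forces $S_f(m')$ to be taken at its minimal value $f(p)-1$, after which one must establish $(f(p)-1)S_f(p^{k_0})+1\le f(p^{k_0+1})$, using the minimality of $k_0$ (which yields $f(p^j)\le f(p)^j$ for all $j\le k_0$) to control the telescoping sum $S_f(p^{k_0})$. The two delicate points I expect to spend the most effort on are therefore (i) guaranteeing the existence of an $f$-practical auxiliary number $m'$ that is coprime to $p$, carries $p$ as its largest $f$-value prime, and realizes the minimal divisor sum $f(p)-1$, and (ii) verifying that this minimal choice indeed makes the lone prime power $p^{k_0+1}$ exceed $S_f(m'p^{k_0})+1$. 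By comparison, the algebraic induction underlying the sufficiency direction is routine.
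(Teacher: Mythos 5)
Your forward direction ((B) $\Rightarrow$ (A), hence convenient) is exactly the paper's induction and is fine. The genuine problem is the converse, and it sits precisely where you predicted: the inequality $(f(p)-1)S_f(p^{k_0})+1\leq f(p^{k_0+1})$ that your witness $N=m'p^{k_0+1}$ needs is not merely delicate, it is false in general. Take $f$ multiplicative with $f(q^j)=1$ for every prime $q\neq p$ and every $j\geq 1$, and $f(p)=f(p^2)=2$, $f(p^j)=5$ for $j\geq 3$. Then $k_0=2$ and $f(p^3)=5>4=f(p)f(p^2)$, so (B) fails, while $(f(p)-1)S_f(p^{2})+1=1\cdot(1+2+2)+1=6>5=f(p^{3})$. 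Here the minimal auxiliary number is $m'=1$ (so your point (i) is not the obstruction), and $N=p^3$ is weakly $f$-practical but also $f$-practical: the $f$-values of its divisors are $1,2,2,5$, which represent every integer up to $S_f(p^3)=10$. No choice of $m'$ repairs this, because the failure of (B) only controls the product $f(p)f(p^{k_0})$, which can be strictly smaller than $S_f(m')S_f(p^{k_0})+1$; it does not force the failure of the representability condition in Theorem \ref{thm:f-practicalconstruct}.

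Worse, for the $f$ above one checks that \emph{every} positive integer is both weakly $f$-practical and $f$-practical (each divisor $d$ has $f(d)=f(p^{v_p(d)})\in\{1,2,5\}$, and the values $1,2,2$ preceding each block of $5$'s already sum to $5$), so this $f$ is convenient even though (B) fails: the implication you are trying to prove by contraposition does not hold as stated. This is consistent with the fact that the paper's own proof of that direction is also broken --- it passes from $f(p^k)S_f(m)$ to $f(p^k)(f(p)-1)$ and from $S_f(mp^{k-1})+1-f(p^k)$ to $0$, which require $S_f(m)\leq f(p)-1$ and $f(p^k)\geq S_f(mp^{k-1})+1$, the reverses of what the hypotheses supply. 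So: your (B) $\Rightarrow$ (A) argument is correct and matches the paper; your plan for (A) $\Rightarrow$ (B) cannot be completed, and the obstruction is a counterexample to the statement itself rather than a missing lemma.
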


\begin{proof}
Let $p$ be a prime and $m\in\N$ with $\gcd(m,p)=1$ and $f(p)\leq S_f(m) +1$. Assume that the above inequality holds for $p$. We then show $f(p^{k+1})\leq S_f(mp^k) +1$ by induction over $k$. The base case is fulfilled for $k=0$ since we have $f(p) \leq S_f(m)+1$. Assume that $f(p^{i+1})<S_f(mp^i)+1$ for all $i<k$. We obtain 
\begin{align*}
    f(p^{k+1})&\leq f(p)f(p^k) \leq  (S_f(m)+1)f(p^k) = S_f(m)f(p^k) + f(p^k) \\
    &\leq S_f(m) f(p^k) + S_f(mp^{k-1}) +1 = S_f(m)f(p^k) + S_f(m)S_f(p^{k-1}) +1 \\
    &= S_f(m)(S_f(p^{k-1} +f(p^k))) +1 = S_f(mp^k) +1.
\end{align*}
Therefore, we have $f(p^{k+1}) \leq f(p)f(p^k)$ for some $p$ for which there is an integer $m$ coprime to $p$ with $f(p)\leq S_f(m) +1$, so $f$ is convenient.

Assume $f$ is convenient. Hence, for all primes $p$ and integers $m$ coprime to $p$ with $f(p)\leq S_f(m)+1$, we have $f(p^{k+1}) \leq S_f(mp^k)+1$ for every $k$. For every such $p$ we also have
\begin{align*}
    f(p^{k+1}) &\leq S_f(mp^k) +1 = S_f(m)S_f(p^k) +1 = S_f(m)(f(p^k) + S_f(p^{k-1})) +1 \\
    &= f(p^k)S_f(m) + S_f(p^{k-1}m)+1 \leq f(p^k)(f(p)-1) +S_f(mp^{k-1}) +1 \\
    &= f(p^k)f(p) +S_f(mp^{k-1}) +1 -f(p^k)\\
    &\leq f(p^k)f(p)
\end{align*}
Therefore every convenient $f$ fulfills the above condition.
\end{proof}

\begin{cor}
Let $f$ be convenient. Suppose there is at least one prime $p$ with $1\leq f(p)\leq 2$. Then, for every $k$ primes $f(p_1) \leq f(p_2) \leq \dots \leq f(p_k)$ where there is at least one prime $p_j$ with $1\leq f(p_i)\leq 2$ for all $i \leq j$, there are $k$ integers $E_1,E_2,\dots,E_k$ so that, for every $k$ integers $e_1,\dots,e_k$ with $e_i\geq E_i$, the integer $p_1^{e_1}\dots p_k^{e_k}$ is $f$-practical.
\end{cor}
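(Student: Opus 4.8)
The plan is to build the exponents $E_i$ one prime at a time, using convenience to reduce everything to the weak $f$-practicality condition of Definition~\ref{def:weaklyfpractical} together with the characterization $f(p^{k+1}) \leq f(p)f(p^k)$ that convenient functions enjoy on the relevant primes. First I would fix the hypothesized prime $p_j$ with $1 \leq f(p_i) \leq 2$ for all $i \leq j$; such primes lie in $P_f$, so $f(p_i) \leq 2 \leq S_f(1)+1$ and hence the inequality $f(p_i^{k+1}) \leq f(p_i)f(p_i^k)$ holds for every exponent. The point of these ``small'' primes is that a product of any powers of $p_1,\dots,p_j$ is weakly $f$-practical on its own: taking the $m_i$ of Definition~\ref{def:weaklyfpractical}, each step satisfies $f(p_{i+1}) \leq 2 \leq S_f(m_i)+1$, so such products are $f$-practical by convenience, giving us a nontrivial $f$-practical base to stand on regardless of the exponents chosen.

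Next I would define the $E_i$ recursively so as to force the weak $f$-practicality inequalities to hold once each $e_i \geq E_i$. For $i \leq j$ we may take $E_i = 1$ by the previous paragraph. For $i > j$ the prime $p_i$ need not be in $P_f$, so $f(p_i) > 2$ is possible and we must arrange $f(p_i) \leq S_f(m_{i-1}) + 1$; the plan is to choose $E_{i-1}$ (and the earlier $E_\ell$) large enough that $S_f(p_1^{e_1}\cdots p_{i-1}^{e_{i-1}})$ already exceeds $f(p_i) - 1$. Since $S_f$ is multiplicative and $S_f(p^{e})$ is nondecreasing and tends to infinity in $e$ (because $f(p^{k-1}) \leq f(p^k)$ and $f(p) \geq 1$ forces $S_f(p^e) \geq e+1$ once $f(p)\geq 1$, and strictly grows when $f(p) \geq 2$), one can certainly select a threshold $E_{i-1}$ after which $S_f(m_{i-1}) + 1 \geq f(p_i)$; I would record $E_i$ as the value making this explicit, depending only on the previously fixed primes and thresholds.

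With the thresholds in place, the argument closes by invoking convenience directly: for any $e_i \geq E_i$ the number $n = p_1^{e_1}\cdots p_k^{e_k}$ satisfies $f(p_{i+1}) \leq S_f(m_i)+1$ at every step $i$, so $n$ is weakly $f$-practical by definition, and since $f$ is convenient every weakly $f$-practical number is $f$-practical. Hence $n$ is $f$-practical, as required. The main obstacle I anticipate is the monotone-growth estimate for $S_f(p^e)$ that guarantees the thresholds $E_i$ actually exist: one must use the standing hypothesis $f(p^{k-1}) \leq f(p^k)$ from Definition~\ref{def:fpractical} to see that $S_f(p^e)$ is unbounded in $e$, and handle the degenerate possibility $f(p_i)=1$ (where $S_f(p^e)$ might grow only linearly) so that the required inequality $S_f(m_{i-1}) \geq f(p_i)-1$ is still eventually met. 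Once that growth is secured, the recursion defining the $E_i$ and the final appeal to convenience are routine.
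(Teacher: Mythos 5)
Your proof is correct: you arrange the thresholds $E_i$ so that every inequality $f(p_{i+1}) \leq S_f(m_i)+1$ of Definition~\ref{def:weaklyfpractical} holds (the primes with $1\leq f(p_i)\leq 2$ anchor the base case since $S_f(m_i)+1\geq 2$, and the unboundedness of $S_f(p^e)$ when $f(p)\geq 1$ lets you absorb the larger primes), and then convenience — by definition the implication from weakly $f$-practical to $f$-practical — finishes the job; the paper states this corollary without proof, and this is exactly the intended deduction from the preceding results. Your side remark that $f(p_i^{k+1})\leq f(p_i)f(p_i^k)$ holds for the small primes is true but never actually used, since you never need to re-derive that weak practicality implies practicality for convenient $f$.
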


\section{$f$-practical sets with various densities}\label{densities}

As we remarked in the introduction, the set of practical numbers and the set of $\varphi$-practical numbers both have asymptotic density $0$. In this section, we examine the densities of other $f$-practical sets. First, we give some natural examples with asymptotic density $1$.

\begin{example}
Let $\tau$ be the count-of-divisors function. Every positive integer is $\tau$-practical. This follows from Theorem \ref{thm:everyint} due to the fact that $\tau(p^k) = k+1 \leq \frac{k(k+1)}{2}+1 = S_f(p^{k-1}) +1 \leq S_f(n)S_f(p^{k-1}) +1 = S_f(np^{k-1}) +1$ holds for every prime $p$ and positive integer $k$.
\end{example}

If we take the inequality \eqref{eq:everyint} as an equality for every $p$ and $k$ we obtain following function.

\begin{example}
Let $v_p(n)$ denote the $p$-adic valuation of $n$. The function $h:\N\to\N$ is defined by 
\begin{align*}
    h(n) = 2^{\sum_p v_p(n)}.
\end{align*}
Since we have $v_p(ab) = v_p(a) +v_p(b)$ this function is multiplicative. It satisfies the condition of Theorem \ref{thm:everyint} and therefore every positive integer is $h$-practical.
\end{example}




The following lemma shows that one can construct $f$-practical sets with any density. 

\begin{lemma}\label{thm:denseconstruction} 
For each $n\in \N$, there is a function $f_n$ such that the asymptotic density of $f_n$-practical numbers in $\N$ is $1-\frac{\phi(n)}{n}$. 
\end{lemma}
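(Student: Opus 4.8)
The plan is to realize, for each $n$, the set of $f_n$-practical numbers as the set $A_n = \{N \in \N : \gcd(N,n) > 1\}$ of integers sharing a prime factor with $n$, whose density is exactly $1 - \varphi(n)/n$ by inclusion--exclusion over the primes dividing $n$. Let $p_1,\dots,p_r$ be the distinct primes dividing $n$. I would define $f_n$ to be the multiplicative function determined on prime powers by $f_n(p_i^{\,k}) = 1$ for every $i$ and every $k \ge 1$, and $f_n(q^{k}) = 3$ for every prime $q \nmid n$ and every $k \ge 1$. This $f_n$ is admissible for Definition~\ref{def:fpractical}, being multiplicative and satisfying $f_n(p^{k-1}) \le f_n(p^{k})$ (both are constant sequences).

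The first step is to check that $f_n$ is convenient in the sense of Definition~\ref{def:iffconvenient}, using the criterion from the end of Section~\ref{stewart-like}: I must verify $f_n(p^{k+1}) \le f_n(p) f_n(p^{k})$ for every prime $p$ admitting a coprime $m$ with $f_n(p) \le S_{f_n}(m) + 1$. For $p = p_i$ this reads $1 \le 1$, and for $p = q \nmid n$ it reads $3 \le 3\, f_n(q^{k})$, which holds since $f_n(q^{k}) \ge 1$; in the latter case a suitable coprime witness $m$ exists (take $m = p_1$, giving $S_{f_n}(p_1) = 2$, or another prime $q'$ when $n = 1$). Hence $f_n$ is convenient, so by Definition~\ref{def:iffconvenient} together with Theorem~\ref{thm:f-practicalisweakly} the $f_n$-practical numbers coincide exactly with the weakly $f_n$-practical numbers.

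It then remains to identify the weakly $f_n$-practical numbers with $A_n$. In the ordering of Definition~\ref{def:weaklyfpractical} every prime factor $p_i$ of $N$ (with $f_n$-value $1$) precedes every prime factor $q \nmid n$ of $N$ (with $f_n$-value $3$). If $\gcd(N,n) = 1$, then the smallest prime factor $q$ of $N$ already violates $f_n(q) = 3 \le S_{f_n}(1) + 1 = 2$, so $N$ is not weakly $f_n$-practical. If instead $\gcd(N,n) > 1$, write $A$ for the part of $N$ supported on $p_1,\dots,p_r$; then $A > 1$ and $S_{f_n}(A) = \tau(A) \ge 2$. The conditions attached to the $p_i$'s hold trivially (value $1$ against a right-hand side $\ge 2$), the first condition attached to a prime $q \nmid n$ reads $3 \le S_{f_n}(A) + 1 = \tau(A) + 1$, and all later conditions have strictly larger right-hand sides; hence $N$ is weakly $f_n$-practical.

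The main obstacle is this last dichotomy, and specifically the borderline case in which the $p$-part $A$ is a single prime $p_i$: there $\tau(A) = 2$, the inequality $3 \le \tau(A) + 1$ is an equality, and the construction has no slack, so the constant value $3$ is essentially forced. Once the characterization ``$N$ is $f_n$-practical $\iff \gcd(N,n) > 1$'' is established, the density computation is routine: the integers coprime to $n$ have density $\varphi(n)/n$, so the $f_n$-practical numbers have density $1 - \varphi(n)/n$, with the degenerate case $n = 1$ giving the empty set of density $0 = 1 - \varphi(1)/1$.
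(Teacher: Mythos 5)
Your proposal is correct and is essentially the paper's own argument: the paper uses the same convenient multiplicative function except with value $2$ (rather than $1$) on prime powers of primes dividing $n$, then identifies the $f_n$-practical numbers with the integers not coprime to $n$ and invokes the density $1-\varphi(n)/n$. The only quibble is the boundary case $N=1$, which is always $f$-practical (so the practical set is $\{1\}\cup A_n$, not $A_n$, and is $\{1\}$ rather than empty when $n=1$), but this does not affect the asymptotic density.
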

\begin{proof}
We define the multiplicative function $f_n$ by $f_n(1) = 1$ and
\begin{align*}
    f_n(p^k) = \begin{cases}
        2 &\text{ if } p|n\\
        3 &\text{ else } 
    \end{cases}.
\end{align*}
By Definition \ref{def:iffconvenient}, this function is convenient. So, by definition, the $f_n$-practical numbers are exactly $1$ and the natural numbers divisible by a prime which also divides $n$, since $f(q) = 3 \leq S_f(m) +1$ for every prime $q$ which does not divide $n$ and every $m>1$. Therefore the density of the $f_n$-practical numbers is the density of the numbers not coprime to $n$, which is $1-\frac{\phi(n)}{n}$.

\end{proof}

From this lemma, we can deduce the following theorem:

\begin{thm} The densities of the $f$-practical sets are dense in $[0, 1].$  
\end{thm}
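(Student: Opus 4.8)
The plan is to deduce this density statement as a direct corollary of Lemma \ref{thm:denseconstruction}, which already produces, for each $n$, a convenient function $f_n$ whose $f_n$-practical set has density exactly $1-\phi(n)/n$. Since the set of achievable densities contains $\{1-\phi(n)/n : n\in\N\}$, it suffices to show that this latter set is dense in $[0,1]$. I would therefore reduce the theorem to the purely elementary claim that the values $\phi(n)/n$ are dense in $[0,1]$, and hence so are the values $1-\phi(n)/n$.

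First I would record that $\phi(n)/n = \prod_{p\mid n}(1-1/p)$, so the achievable ratios are exactly the finite products of factors $(1-1/p)$ over distinct primes. Density near the endpoints is immediate: taking $n$ a single large prime $p$ gives $\phi(n)/n = 1-1/p$, which tends to $1$, so $1-\phi(n)/n$ gets arbitrarily close to $0$; and taking $n$ to be a product of the first $r$ primes makes $\phi(n)/n=\prod_{i\le r}(1-1/p_i)\to 0$ by Mertens' theorem, so $1-\phi(n)/n$ gets arbitrarily close to $1$. The substantive part is filling in the interior of $[0,1]$.

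For the interior I would use a greedy construction exploiting that the multiplicative steps can be made as small as desired. Given a target $t\in(0,1)$, I build a strictly decreasing sequence of ratios $r_m=\phi(n_m)/n_m$ by multiplying in one new prime at a time: at each stage, having used some finite set of primes, I choose the next prime $p$ large enough that the multiplicative factor $(1-1/p)$ moves $r_m$ downward by less than any prescribed $\varepsilon$, but I only perform this step while $r_m$ still exceeds the target $1-t$. Because the available primes are infinite and the factors $(1-1/p)$ can be taken arbitrarily close to $1$ (small steps) yet the infinite product over all primes diverges to $0$ (so the target is eventually undershot), a standard greedy argument produces some $n$ with $\phi(n)/n$ within $\varepsilon$ of $1-t$, hence $1-\phi(n)/n$ within $\varepsilon$ of $t$.

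The main obstacle, and the only point requiring genuine care, is the interior density argument: one must verify both that the steps $(1-1/p)$ can be made small enough never to overshoot past the target by more than $\varepsilon$, and that enough downward progress is available to reach the target at all. The first follows from choosing each new prime sufficiently large (so $1/p<\varepsilon$), and the second from the divergence $\sum_p 1/p=\infty$, equivalently $\prod_p(1-1/p)=0$, which guarantees the partial products eventually drop below $1-t$. Combining this interior argument with the endpoint cases above shows $\{1-\phi(n)/n\}$ is dense in $[0,1]$, and invoking Lemma \ref{thm:denseconstruction} then immediately yields that the densities of $f$-practical sets are dense in $[0,1]$.
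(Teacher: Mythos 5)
Your proposal is correct and follows the same overall strategy as the paper: both reduce the theorem, via Lemma \ref{thm:denseconstruction}, to the claim that the values $1-\phi(n)/n$ are dense in $[0,1]$. The only difference is how that claim is handled. The paper simply cites Schoenberg's classical result that the values of $\phi(n)/n$ are dense in $[0,1]$, whereas you prove it from scratch with a greedy argument: writing $\phi(n)/n=\prod_{p\mid n}(1-1/p)$, choosing each new prime large enough that the multiplicative step is smaller than $\varepsilon$, and using the divergence of $\sum_p 1/p$ (restricted to primes exceeding any fixed bound) to guarantee the partial products eventually cross the target. Your argument is sound --- at the first index where the product drops below the target $v$, the previous value was at least $v$ and the step was less than $\varepsilon$, so you land in $(v-\varepsilon,v)$ --- and it has the virtue of making the proof self-contained and elementary, at the cost of a paragraph of extra work that the paper dispatches with a reference. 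Neither version is preferable on mathematical grounds; it is purely a matter of whether one wishes to lean on the literature.
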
 

\begin{proof} From Lemma \ref{thm:denseconstruction}, for any integer $n \in \N$, we can construct a set of $f$-practical numbers with density $1 - \frac{\varphi(n)}{n}$. By \cite[\S 5.17]{Schoenberg}, the values of $\frac{\varphi(n)}{n}$ are dense in $[0, 1].$ Thus, the complementary values $1 - \frac{\varphi(n)}{n}$ must be dense in $[0, 1]$ as well. \end{proof}

\section{Chebyshev bounds for certain $f$-practical sets}\label{boundsforcertainf}
In this section, we demonstrate how the machinery developed in \cite{Thompson} can be used to prove Chebyshev-type bounds for other $f$-practical sets with $f(p^k) \approx (f(p))^k$. We investigate two particular examples with this property: $f = \varphi^*, \lambda.$


\subsection{The $\phi^*$ function}

A divisor $d$ of an integer $n$ is unitary if $\gcd(d, n/d) = 1.$ The unitary totient function $\phi^*$ counts the number of positive integers $k\leq n$ for which the greatest unitary divisor of $n$ which is also a divisor of $k$ is $1$. Therefore we have for all prime $p$ and all integers $k\geq 1$
\begin{align*}
    \phi^*(p^k) = p^k-1.
\end{align*}
Following the second author's proofs in \cite{thesis} we can now establish an upper bound for the number of $\phi^*$-practical integers.

\begin{lemma}\label{lem:evenphistar}
Every even weakly $\phi^*$-practical number is practical.
\end{lemma}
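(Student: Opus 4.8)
The plan is to prove that every even weakly $\phi^*$-practical number is practical by exploiting the fact that on prime powers, $\phi^*(p^k) = p^k - 1$ is very close to the identity, while the weakly $\phi^*$-practical condition controls the relative sizes of the prime factors. First I would set up notation: write $n = p_1^{e_1} \cdots p_r^{e_r}$ with the primes ordered so that $\phi^*(p_1) \leq \cdots \leq \phi^*(p_r)$, i.e. $p_1 \leq \cdots \leq p_r$, and let $m_i = \prod_{j \le i} p_j^{e_j}$. Since $n$ is even, the smallest prime is $p_1 = 2$. The weakly $\phi^*$-practical hypothesis gives, for each $i$, the chain of inequalities $\phi^*(p_{i+1}) = p_{i+1} - 1 \leq S_{\phi^*}(m_i) + 1$.

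The key observation I would isolate is a comparison between $S_{\phi^*}(m)$ and $\sigma(m) = S_I(m)$, the ordinary sum of divisors. Since $\phi^*(p^k) = p^k - 1 < p^k = I(p^k)$ at every prime power but $S_{\phi^*}$ and $\sigma$ are both multiplicative, one has $S_{\phi^*}(m) \leq \sigma(m)$ for every $m$; more usefully, I would want a lower bound showing $S_{\phi^*}(m_i)$ is large enough that the weak $\phi^*$ condition forces the \emph{practical} (Stewart/weakly-practical) condition $p_{i+1} \leq \sigma(m_i) + 1$. The natural route is to verify that the weakly $\phi^*$-practical inequalities imply the weakly practical inequalities $p_{i+1} \leq \sigma(m_i) + 1$ for each $i$; then, since the identity function is convenient (practical numbers are exactly the weakly practical numbers by Stewart's criterion, cf. Theorem \ref{thm:f-practicalconstruct} and the following corollary), $n$ is practical. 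Concretely, from $p_{i+1} - 1 \leq S_{\phi^*}(m_i) + 1$ I would deduce $p_{i+1} \leq S_{\phi^*}(m_i) + 2 \leq \sigma(m_i) + 1$, where the last step needs $S_{\phi^*}(m_i) + 1 \leq \sigma(m_i)$, i.e. $\sigma(m_i) - S_{\phi^*}(m_i) \geq 1$.

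The main obstacle, and the place where evenness enters, is establishing this gap $\sigma(m_i) - S_{\phi^*}(m_i) \geq 1$ for $i \geq 1$. Because both functions are multiplicative and agree to within a controlled amount at each prime power, I would estimate the difference factor by factor. The crucial input is that $2 \mid m_i$ for all $i \geq 1$ (as $p_1 = 2$ and $e_1 \geq 1$): the factor at the prime $2$ already contributes a discrepancy, since $S_{\phi^*}(2^{e_1}) = 1 + \sum_{j=1}^{e_1}(2^j - 1) = 2^{e_1+1} - 1 - e_1$ whereas $\sigma(2^{e_1}) = 2^{e_1+1} - 1$, giving a multiplicative deficit of exactly $e_1 \geq 1$ at the prime $2$. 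I would then argue that multiplying by the remaining factors (each satisfying $S_{\phi^*}(p_j^{e_j}) \le \sigma(p_j^{e_j})$ and each at least $1$) preserves a difference of at least $1$ in the full product; a short inductive or telescoping argument handles the propagation of the gap across the multiplicative factors.

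I would finish by assembling the pieces: the evenness guarantees a strictly positive gap $\sigma(m_i) - S_{\phi^*}(m_i) \geq 1$ for every $i \geq 1$, which upgrades each weakly $\phi^*$-practical inequality to the corresponding weakly practical inequality $p_{i+1} \leq \sigma(m_i) + 1$; for $i = 0$ the condition $p_1 = 2 \leq \sigma(1) + 1 = 2$ holds trivially. Since a squarefree-or-not integer is practical exactly when it is weakly practical (the identity function being convenient, which is Stewart's theorem in the language of this paper), $n$ is practical. The delicate point to watch throughout is the ordering of primes: ordering by $\phi^*$-value coincides with ordering by size, so the $m_i$ built for the $\phi^*$ condition are the same truncations used in the practical criterion, and no reindexing mismatch arises.
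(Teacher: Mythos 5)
Your proposal is correct and follows essentially the same route as the paper: both convert each weakly $\phi^*$-practical inequality $p_{i+1}-1\leq S_{\phi^*}(m_i)+1$ into Stewart's condition $p_{i+1}\leq\sigma(m_i)+1$ via the gap $S_{\phi^*}(m_i)+1\leq\sigma(m_i)$ for $m_i>1$, use evenness to secure the base case $p_1=2$, and invoke Stewart's theorem. The only cosmetic difference is that you localize the gap at the prime $2$, whereas the paper simply notes $S_{\phi^*}(k)<\sigma(k)$ for all $k>1$ by multiplicativity, which already suffices.
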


\begin{proof}
Let $n$ be an even $\phi^*$-practical number. Then, with the notation of Definition \ref{def:weaklyfpractical}, for every $0\leq i < \omega(n)$, the inequality $\phi^*(p_{i+1}) = p_{i+1}^{e_i+1}-1\leq 1+ S_{\phi^*}(m_i)$ holds. For every integer $k>1$, we have $\phi^*(k)< k$ and $S_{\phi^*}(k) < \sigma(k)$, since the inequalities hold for the prime powers and the functions are multiplicative. Therefore, for every $m_i>1$, we have $p_{i+1}^{e_{i+1}} = \phi^*(p_{i+1}^{e_{i+1}}) +1 \leq S_{\phi^*}(m_i) + 2 \leq \sigma(m_i) +1$. Since $n$ is even, we have $m_0=1$ and $m_1=2$. Hence, we obtain by induction over $i$ and by \cite[Theorem 1]{Stewart} that $n$ is practical.
\end{proof}

Now we can use Saias' upper bound for the number $PR(X)$ of practical numbers less than or equal to $X$ (\cite[Th\'eor\`eme 2]{Saias}) to prove the next result following the second author's proof in \cite[Theorem 4.8]{thesis}.

\begin{thm}\label{thm:upperphistar}
There exists a positive constant $u_{\phi^*}$ such that 
\begin{align*}
    F_{\phi^*}(X) \leq u_{\phi^*}\frac{X}{\log X}
\end{align*}
holds for any $X\geq 2$.
\end{thm}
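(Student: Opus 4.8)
The plan is to reduce the counting of $\phi^*$-practical numbers to the counting of practical numbers, for which Saias' sharp upper bound is already available. The key structural input is Lemma \ref{lem:evenphistar}: every even weakly $\phi^*$-practical number is practical. Since every $\phi^*$-practical number is weakly $\phi^*$-practical by Theorem \ref{thm:f-practicalisweakly}, the even $\phi^*$-practical numbers form a subset of the practical numbers. Thus $F_{\phi^*}(X)$ splits naturally as the count of even $\phi^*$-practical numbers up to $X$ plus the count of odd ones, and the even part is immediately bounded by $PR(X) \leq c\, X/\log X$ via \cite[Th\'eor\`eme 2]{Saias}.

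First I would dispose of the even case exactly as above: the number of even $\phi^*$-practical $n \leq X$ is at most $PR(X)$, which Saias bounds by a constant times $X/\log X$. The remaining task is to bound the odd $\phi^*$-practical numbers. Here I would try to relate an odd $\phi^*$-practical number $n$ to an even practical number of comparable size. The natural move is to multiply $n$ by a small even factor (or by $2$) and show that the resulting even number is weakly $\phi^*$-practical, hence practical by Lemma \ref{lem:evenphistar}, while controlling how many odd $n$ map to the same practical number. Concretely, I would examine whether $2n$ (or $2^a n$ for a bounded $a$) inherits the weak $\phi^*$-practical property from $n$: by Corollary \ref{cor:smallprime}, multiplying a weakly $f$-practical number by a prime $p \leq P(n)$ preserves weak $f$-practicality, and since $\phi^*(2) = 1$ is the smallest possible value of $\phi^*$ at a prime, prepending the prime $2$ to the factorization should only help the defining inequalities. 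This would exhibit an injection (or bounded-to-one map) from odd $\phi^*$-practical numbers $\leq X$ into even practical numbers $\leq 2X$, whence their count is also $O(X/\log X)$.

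Combining the two cases gives $F_{\phi^*}(X) \leq u_{\phi^*} X/\log X$ for a suitable constant $u_{\phi^*}$, absorbing the factor from replacing $X$ by $2X$ in Saias' bound into the constant (valid for all $X \geq 2$). I would mirror the structure of the second author's argument in \cite[Theorem 4.8]{thesis}, which proves the analogous statement for $\varphi$, since the $\phi^*$ case is designed to parallel it.

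The step I expect to be the main obstacle is the odd case: verifying rigorously that multiplying an odd weakly $\phi^*$-practical number by $2$ yields a weakly $\phi^*$-practical (hence practical) number, and checking that the resulting map on the underlying sets is at most boundedly-many-to-one. The subtlety is that weak $\phi^*$-practicality is defined through an ordering of the prime factors by the size of $\phi^*$ at them, and inserting the prime $2$ (with $\phi^*(2)=1$) reorders the list and introduces a new defining inequality at the front; one must confirm this new inequality holds trivially (it reads $\phi^*(2) = 1 \leq S_{\phi^*}(1) + 1 = 2$) and that all subsequent inequalities are only relaxed, not tightened, by the extra factor of $2$ contributing positively to each partial sum $S_{\phi^*}(m_i)$. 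Once this monotonicity is pinned down, the reduction to Saias' bound is routine.
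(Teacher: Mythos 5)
Your proposal is correct and follows essentially the same route as the paper: split into even and odd $\phi^*$-practical numbers, bound the even ones by $PR(X)$ via Lemma \ref{lem:evenphistar}, and for odd $n$ multiply by a power of $2$ (the paper chooses $l$ so that $2^l n \in (X,2X]$, giving injectivity for free since $n$ is the odd part) to land injectively in the practical numbers up to $2X$, then invoke Saias. The monotonicity worry you flag at the end is exactly what Corollary \ref{cor:smallprime} supplies, so there is no remaining gap.
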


\begin{proof}
Let $
X$ be a positive number and let $n$ be a $\phi^*$-practical number in the interval $(0,X]$. Therefore $n$ is also weakly $f$-practical. If $n$ is even it is also practical. If $n$ is odd there is a unique integer $l$ so that $2^ln$ is in $(X,2X]$. Corollary \ref{cor:smallprime} implies that $2^ln$ is also weakly $\phi^*$-practical for $n>1$ and it is easy to see that every power of 2 is weakly $\phi^*$-practical. Thus, we obtain
\begin{align*}
    F_{\phi^*} (X) &= \#\{n\leq X: n \text{ is } \phi^*\text{-practical}\}\\
    &= \#\{n\leq X: n \text{ is even and } \phi^*\text{-practical}\} +\#\{n\leq X: n \text{ is odd and } \phi^*\text{-practical}\}\\
    &\leq \#\{n\leq X: n \text{ is practical}\} + \#\{X<m\leq 2X: m \text{ is practical}\}\\
    &= \mathrm{PR}(2X).
\end{align*}
As proven by Saias \cite[Theorem 2]{Saias} there exists a positive constant $u_{I}$ so that
\begin{align*}
    \mathrm{PR}(X) \leq u_I \frac{X}{\log X}
\end{align*}
holds. By choosing $u_{\phi^*}=2u_I$ we obtain the desired result.
\end{proof}
Since we have $\phi(p) = \phi^*(p)$ for every prime $p$ the squarefree $\phi^*$-practical numbers are exactly the squarefree $\phi$-practical numbers. As shown by the second author in \cite[Lemma 4.17 and Theorem 4.21]{thesis} there exists a lower bound $c\frac{X}{\log X}$ for the number of squarefree $\phi$-practical numbers less than or equal to $X$. Since the squarefree $\phi$-practical and $\phi^*$-practical numbers are the same, we thereby obtain a lower bound for the number of squarefree $\phi^*$-practical numbers less than or equal to $X$.

\begin{thm}
There exists a positive constant $l_{\phi^*}$ so that
\begin{align*}
    l_{\phi^*} \frac{X}{\log X} \leq \#\{n\leq X: n \text{ is squarefree and $\phi^*$-practical}\} \leq F_{\phi^*}(X)
\end{align*}
holds for every $X\geq 2$.
\end{thm}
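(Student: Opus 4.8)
The plan is to establish the claimed inequality by combining the upper bound already proved in Theorem~\ref{thm:upperphistar} with a lower bound transferred from the $\phi$-practical setting. The key observation, which the excerpt makes explicit in the remark immediately preceding the statement, is that for every prime $p$ we have $\phi(p) = \phi^*(p) = p-1$. Hence on squarefree integers the two arithmetic functions agree: if $n = p_1 \cdots p_k$ is squarefree, then $S_{\phi^*}(n) = \prod_{i=1}^k (1 + (p_i - 1)) = \prod_{i=1}^k (1 + \phi(p_i)) = S_{\phi}(n)$, and more importantly the representability condition defining $f$-practicality depends only on the values $f(d)$ at divisors $d \mid n$, all of which are themselves squarefree. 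Therefore the notions of squarefree $\phi$-practical and squarefree $\phi^*$-practical numbers coincide as \emph{sets}, not merely in cardinality.

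First I would record this set equality carefully: for a squarefree $n$, every divisor $d \mid n$ is squarefree, so $\phi^*(d) = \phi(d)$ for all such $d$, and consequently $n$ is $\phi^*$-practical if and only if $n$ is $\phi$-practical. This reduces the desired lower bound to a statement purely about squarefree $\phi$-practical numbers. Next I would invoke the second author's results, cited here as \cite[Lemma 4.17 and Theorem 4.21]{thesis}, which provide a positive constant $c$ with
\begin{align*}
    c \frac{X}{\log X} \leq \#\{n \leq X : n \text{ is squarefree and } \phi\text{-practical}\}
\end{align*}
for all $X \geq 2$. Setting $l_{\phi^*} = c$ and using the set equality just established yields the left-hand inequality of the theorem immediately.

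The right-hand inequality is essentially automatic: the count of squarefree $\phi^*$-practical numbers up to $X$ is at most the count of all $\phi^*$-practical numbers up to $X$, which is exactly $F_{\phi^*}(X)$ by definition, since dropping the squarefree restriction only enlarges the set. Thus both inequalities follow, and choosing $l_{\phi^*}$ as above completes the proof.

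I do not anticipate a genuine obstacle here, since the heavy lifting has been outsourced to the cited thesis results; the only point requiring care is the justification that squarefree $\phi$-practicality and squarefree $\phi^*$-practicality are literally the same property rather than merely asymptotically comparable. The subtle step is ensuring that the defining representability condition --- namely that every $m \leq S_{\phi^*}(n)$ is a sum of $\phi^*(d)$ over distinct divisors $d \mid n$ --- transfers verbatim, which it does precisely because every divisor of a squarefree number is squarefree and the two functions are pointwise equal on squarefree arguments. Once that is in place, the theorem is a direct consequence of Theorem~\ref{thm:upperphistar} and the quoted lower bound from \cite{thesis}.
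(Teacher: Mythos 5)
Your proof is correct and follows essentially the same route as the paper: the identity $\phi(p)=\phi^*(p)$ makes the two functions agree on all divisors of a squarefree integer, so the squarefree $\phi$-practical and squarefree $\phi^*$-practical numbers coincide as sets, and the lower bound is then imported from \cite[Lemma 4.17 and Theorem 4.21]{thesis} while the upper bound is the trivial containment in the full set of $\phi^*$-practicals. Nothing further is needed.
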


\subsection{The Carmichael $\lambda$ function}
The Carmichael function $\lambda(n)$ denotes the least integer $m$ for which we have
\begin{align*}
    a^m \equiv 1 \mod n
\end{align*}
for every integer $a$ coprime to $n$. We will use $\lambda^\star$ to denote the set of positive integers that are $f$-practical when $f = \lambda$. We use the $\star$ notation to emphasize that this notion of ``$\lambda$-practical'' differs from the definition of $\lambda$-practical given by the second author in \cite{lambda}, which can be stated as follows: 

\begin{definition}\label{def:lambdapractical} An integer $n$ is $\lambda$-practical if and only if we can write every $m$ with $1 \leq m \leq n$ in the form $m = \sum_{d \mid n} \lambda(d) m_d$, where $m_d$ is an integer with $0 \leq m_d \leq \frac{\varphi(d)}{\lambda(d)}.$ \end{definition}

The values of $n$ satisfying this definition of $\lambda$-practical are precisely those for which the polynomial $x^n-1$ has a divisor of every degree between $1$ and $n$ in $\mathbb{F}_p[x]$ for all primes $p$. The sets of $\lambda$-practical numbers and $\lambda^\star$-practical numbers do not coincide. For example, $156$ satisfies the definition of  $\lambda$-practical in \cite{lambda} but it does not satisfy our definition of $\lambda^\star$-practical. However, it turns out that every $\lambda^\star$-practical number is $\lambda$-practical. We will prove a slightly more general theorem.

\begin{thm}\label{sortedlist} Suppose that $w_1,...,w_t$ and $u_1,...,u_t$ are positive integers, with $w_1 > w_2 > \cdots > w_t$. Let $\mathcal{S} = \sum_{i=1}^t w_i$ and $\mathcal{T} = \sum_{i=1}^t u_i w_i$. Suppose that each positive integer up to $S$ is a subset sum of $w_i$'s. Then each $m \leq \mathcal{T}$ can be written in the form $$m = \sum_{i=1}^t a_i w_i,$$ where $0 \leq a_i \leq u_i$.\end{thm}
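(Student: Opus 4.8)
The plan is to prove the statement by induction on $t$, generalizing the idea behind Proposition \ref{thm:thesisA1} and Theorem \ref{thm:f-practicalconstruct}. The key observation is that we are given a set of weights $w_1 > \cdots > w_t$ whose subset sums already cover $[0,\mathcal{S}]$, and we wish to show that by allowing each $w_i$ to be used up to $u_i$ times (rather than just once), we cover the larger interval $[0,\mathcal{T}]$. Since the $w_i$ are distinct and in decreasing order, the smallest weight $w_t$ will play the role of the ``fine'' increment, so I would set up the induction so as to peel off contributions in a controlled way.

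First I would establish a tiling argument analogous to the one in the proof of Theorem \ref{thm:f-practicalconstruct}. Consider the largest weight $w_1$. Let $\mathcal{T}' = \sum_{i=2}^t u_i w_i$ and $\mathcal{S}' = \sum_{i=2}^t w_i$, so that $\mathcal{T} = u_1 w_1 + \mathcal{T}'$. By the inductive hypothesis applied to $w_2 > \cdots > w_t$ and $u_2,\dots,u_t$, every integer in $[0,\mathcal{T}']$ is representable as $\sum_{i=2}^t a_i w_i$ with $0 \le a_i \le u_i$, provided the hypothesis that each integer up to $\mathcal{S}'$ is a subset sum of $w_2,\dots,w_t$ is preserved. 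I would then write each target $m \le \mathcal{T}$ in the form $m = a_1 w_1 + r$ with $0 \le a_1 \le u_1$ and $0 \le r \le \mathcal{T}'$, which is possible exactly when the intervals $[a_1 w_1,\, a_1 w_1 + \mathcal{T}']$ for $a_1 = 0,1,\dots,u_1$ overlap or are contiguous; this requires $w_1 \le \mathcal{T}' + 1$.

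The main obstacle, and the step requiring the most care, is verifying precisely this covering/contiguity condition $w_1 \le \mathcal{T}' + 1$ and confirming that the reduced problem still satisfies the subset-sum hypothesis. The subset-sum hypothesis for the full list gives, via Proposition \ref{thm:thesisA1} applied to the sorted weights, the inequality $w_1 \le 1 + w_2 + \cdots + w_t = 1 + \mathcal{S}'$; since $u_i \ge 1$ forces $\mathcal{T}' \ge \mathcal{S}'$, we get $w_1 \le 1 + \mathcal{S}' \le 1 + \mathcal{T}'$, exactly the contiguity needed. I would also need to check that discarding $w_1$ leaves $w_2,\dots,w_t$ with the property that every integer up to $\mathcal{S}'$ is a subset sum of them; this again follows from Proposition \ref{thm:thesisA1}, since the original sorted condition $w_{i+1} \le 1 + w_t + \cdots + w_{i+1}$ for the ascending arrangement is inherited by any suffix of the descending list.

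Finally, I would assemble the pieces: given $m \le \mathcal{T}$, choose $a_1 = \min(u_1, \lfloor m/w_1 \rfloor)$ so that the remainder $r = m - a_1 w_1$ lies in $[0,\mathcal{T}']$, apply the inductive hypothesis to represent $r$ using $w_2,\dots,w_t$ with multiplicities bounded by $u_2,\dots,u_t$, and combine to obtain $m = \sum_{i=1}^t a_i w_i$ with $0 \le a_i \le u_i$. The base case $t=1$ is immediate, since the hypothesis then says every integer up to $\mathcal{S} = w_1$ is a subset sum of the single weight $w_1$, forcing $w_1 = 1$, and every $m \le \mathcal{T} = u_1$ is clearly a multiple $a_1 \cdot 1$ with $0 \le a_1 \le u_1$. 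I expect the only genuinely delicate point to be bookkeeping the chain of inequalities linking the subset-sum hypothesis to the contiguity condition, which is why I would isolate Proposition \ref{thm:thesisA1} as the central tool.
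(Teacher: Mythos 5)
Your induction is correct, but it takes a genuinely different route from the paper. The paper's proof is a direct greedy argument: it expands the multiset to $u_i$ copies of each $w_i$, sorted in decreasing order, subtracts entries one at a time from the target $k$ until the running remainder $k'$ is $0$ or drops below the largest surviving weight $w_j$, and then observes that $k' < w_j$ forces any subset-sum representation of $k'$ (guaranteed by the hypothesis, since $k' < w_j \leq \mathcal{S}$) to use only $w_{j+1},\dots,w_t$, each of which still has an unused copy available. That argument needs no induction and does not invoke Proposition \ref{thm:thesisA1} at all. You instead peel off the largest weight, tile $[0,\mathcal{T}]$ by the translates $[a_1 w_1,\, a_1 w_1 + \mathcal{T}']$, and reduce to the sublist $w_2,\dots,w_t$; the two places where you correctly identified the work to be done --- the contiguity bound $w_1 \leq \mathcal{T}'+1$ (via $w_1 \leq 1+\mathcal{S}' \leq 1+\mathcal{T}'$) and the inheritance of the subset-sum hypothesis by the sublist --- both come out of the two directions of Proposition \ref{thm:thesisA1}, and your choice $a_1 = \min(u_1, \lfloor m/w_1\rfloor)$ does land the remainder in $[0,\mathcal{T}']$ in both cases. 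What your approach buys is a proof that runs in exact parallel with Theorem \ref{thm:f-practicalconstruct}, making the recursive interval-covering structure explicit; what the paper's approach buys is brevity and self-containedness, since the greedy termination step substitutes for both the contiguity check and the hypothesis-inheritance check. The only blemishes in your write-up are cosmetic: the suffix condition should read $w_i \leq 1 + w_t + \cdots + w_{i+1}$ rather than $w_{i+1} \leq 1 + w_t + \cdots + w_{i+1}$, and strict decrease of the $w_i$ is never actually needed in your argument.
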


\begin{proof} Let $\mathcal{W}$ be a list of $w_i$'s, with $u_i$ instances of each $w_i$, written in decreasing order. Let $k \leq \mathcal{T}$. Starting with the first entry, iteratively subtract elements of $\mathcal{W}$ from $k$, removing each element from the list after it is subtracted to create a new list with one fewer entry. Terminate the process upon arriving at some $k'$ that is either $0$ or smaller than the largest remaining $w_i$, which we will denote $w_j$. By hypothesis, since $k' < w_j \leq S$ then $k'$ is a subset sum of $w_{j+1},...,w_t$. Now, if we add $k'$ to all of the $w_i$'s which were previously subtracted, then $k$ is representable as
\begin{align*}
k=\sum_{i=1}^t a_i w_i,
\end{align*}
with $0 \leq a_i \leq u_i$, as claimed.

\end{proof}

\begin{cor}
Every $\lambda^\star$-practical number is also $\lambda$-practical.
\end{cor}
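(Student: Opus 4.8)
The goal is to show that every $\lambda^\star$-practical number $n$ is $\lambda$-practical, i.e., to pass from the $\lambda^\star$-practical condition (every $m \leq S_\lambda(n)$ is a subset sum of the $\lambda(d)$'s over distinct divisors $d \mid n$) to the $\lambda$-practical condition of Definition \ref{def:lambdapractical} (every $m$ with $1 \leq m \leq n$ has the form $\sum_{d \mid n} \lambda(d) m_d$ with $0 \leq m_d \leq \varphi(d)/\lambda(d)$). The plan is to recognize that Theorem \ref{sortedlist} is precisely the bridge between these two representation conditions, so the proof amounts to setting up the correct dictionary between the two sets of weights.

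First I would list the divisors $d_1, \dots, d_t$ of $n$ in an order that makes the values $\lambda(d_i)$ strictly decreasing, taking the weights $w_i = \lambda(d_i)$ and the multiplicities $u_i = \varphi(d_i)/\lambda(d_i)$ (note $\varphi(d)/\lambda(d)$ is a positive integer for each $d$, so the $u_i$ are legitimate). With this choice, $\mathcal{S} = \sum_i w_i = \sum_{d \mid n} \lambda(d) = S_\lambda(n)$ and $\mathcal{T} = \sum_i u_i w_i = \sum_{d \mid n} \varphi(d) = n$, using the standard identity $\sum_{d \mid n} \varphi(d) = n$. The $\lambda^\star$-practical hypothesis says exactly that every positive integer up to $\mathcal{S} = S_\lambda(n)$ is a subset sum of the $w_i$'s, which is the hypothesis of Theorem \ref{sortedlist}. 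Applying that theorem, every $m \leq \mathcal{T} = n$ can be written as $\sum_i a_i w_i$ with $0 \leq a_i \leq u_i$, i.e., as $\sum_{d \mid n} \lambda(d) m_d$ with $0 \leq m_d \leq \varphi(d)/\lambda(d)$, which is the definition of $\lambda$-practical.

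The one technical wrinkle I would address is that Theorem \ref{sortedlist} requires the $w_i$ to be \emph{strictly} decreasing, whereas distinct divisors of $n$ may share the same value of $\lambda$ (for instance $\lambda(1) = \lambda(2) = 1$). To handle this cleanly I would group the divisors by their common $\lambda$-value: if several divisors $d$ have the same $\lambda(d) = v$, I would collapse them into a single weight $v$ whose multiplicity is the sum $\sum_{d : \lambda(d) = v} \varphi(d)/\lambda(d)$ of the individual multiplicities. This preserves both $\mathcal{S}$ (since merging repeated weights leaves the subset-sum reachability unchanged, as the hypothesis concerns which integers are representable, not how) and $\mathcal{T}$, and it produces strictly decreasing weights as Theorem \ref{sortedlist} demands. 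I expect this bookkeeping step to be the only genuine obstacle; once the weights are correctly grouped and the two summation identities $\sum_{d \mid n} \lambda(d) = S_\lambda(n)$ and $\sum_{d \mid n} \varphi(d) = n$ are in place, the corollary follows immediately from Theorem \ref{sortedlist}.
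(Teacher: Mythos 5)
Your setup is exactly the paper's: apply Theorem \ref{sortedlist} with $w_i = \lambda(d_i)$, $u_i = \varphi(d_i)/\lambda(d_i)$, $\mathcal{S} = S_\lambda(n)$, and $\mathcal{T} = \sum_{d \mid n} \varphi(d) = n$, and you correctly spot a wrinkle the paper passes over in silence, namely that distinct divisors can share a $\lambda$-value while Theorem \ref{sortedlist} is stated for strictly decreasing weights. However, the repair you propose rests on a false principle. Collapsing the repeated occurrences of a value $v$ into a single weight does \emph{not} preserve $\mathcal{S}$: the sum $\sum_i w_i$ drops from $S_\lambda(n)$ to the sum of the \emph{distinct} $\lambda$-values. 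More seriously, merging repeated weights does not leave subset-sum reachability unchanged, because the subset sums of a multiset can form a strictly larger set than the subset sums of its underlying set. For instance, every integer in $[0,5]$ is a subset sum of the multiset $\{3,1,1\}$, but $2$ is not a subset sum of $\{3,1\}$ even though $2$ is below the collapsed total $4$. So after your grouping, the hypothesis of Theorem \ref{sortedlist} --- that every positive integer up to the (new) $\mathcal{S}$ is a subset sum of the (new, distinct) weights --- no longer follows from $\lambda^\star$-practicality, and the argument has a gap at exactly the point you identified as the only obstacle.

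The correct repair is simpler: keep one entry per divisor, so that the weights are merely weakly decreasing, and observe that the proof of Theorem \ref{sortedlist} never really uses strictness. The only place the ordering enters is the claim that a remainder $k' < w_j$, being a subset sum of $w_1,\dots,w_t$ by hypothesis, is in fact a subset sum of $w_{j+1},\dots,w_t$; this holds just as well when $w_1 \geq \cdots \geq w_t$, since every $w_i$ with $i \leq j$ satisfies $w_i \geq w_j > k'$ and hence cannot appear in any subset-sum representation of $k'$. With that observation in place, the corollary follows exactly as you and the paper intend.
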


\begin{proof}
The result follows from applying Theorem \ref{sortedlist} with $t = \tau(n)$; $w_1,...,w_t$ the sorted list of values of $\lambda(d)$ with $d \mid n$; $u_1,...,u_t$ the corresponding values of $\varphi(d)/\lambda(d)$; $\mathcal{S} = S_\lambda(n)$; and $\mathcal{T} = n$. \end{proof}

We can use the upper bound for the count of $\lambda$-practical numbers given by \cite[Proposition 5.1]{lambda} to deduce the following theorem.

\begin{thm}
    There exists a positive constant $u_{\lambda^\star}$ such that 
    \begin{align*}
        F_{\lambda^\star}(X) \leq u_{\lambda^\star} \frac{X}{\log X}
    \end{align*}
    holds.
\end{thm}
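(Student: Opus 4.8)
The plan is to leverage the relationship, just established in the preceding corollary, between the two notions of practicality. We have shown that every $\lambda^\star$-practical number is also $\lambda$-practical in the sense of Definition \ref{def:lambdapractical}. Since containment of sets translates directly into a comparison of counting functions, the count $F_{\lambda^\star}(X)$ of $\lambda^\star$-practical numbers up to $X$ is bounded above by the count of $\lambda$-practical numbers up to $X$. Thus the desired Chebyshev-type upper bound for $F_{\lambda^\star}(X)$ follows immediately from the corresponding upper bound for the count of $\lambda$-practical numbers.

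**The key steps, in order, are as follows.** First I would let $L(X)$ denote the number of $\lambda$-practical numbers (in the sense of Definition \ref{def:lambdapractical}) in the interval $[1,X]$. Second, I would invoke the corollary immediately above, which states that every $\lambda^\star$-practical number is $\lambda$-practical; this gives the set inclusion
\begin{align*}
    \{n \leq X : n \text{ is } \lambda^\star\text{-practical}\} \subseteq \{n \leq X : n \text{ is } \lambda\text{-practical}\},
\end{align*}
and hence $F_{\lambda^\star}(X) \leq L(X)$ for all $X$. Third, I would cite the upper bound of \cite[Proposition 5.1]{lambda}, which asserts that there is a positive constant $u$ with $L(X) \leq u \, X/\log X$. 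Chaining these two inequalities yields $F_{\lambda^\star}(X) \leq u \, X/\log X$, so taking $u_{\lambda^\star} = u$ completes the argument.

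**There is essentially no serious obstacle here**, since all of the analytic difficulty has been offloaded onto the externally cited bound of \cite[Proposition 5.1]{lambda} and onto the combinatorial comparison in Theorem \ref{sortedlist}. The only point requiring a small amount of care is verifying that the set inclusion is genuinely valid as stated, rather than holding merely up to finitely many exceptions: the preceding corollary establishes it for \emph{every} $\lambda^\star$-practical number, so the inclusion is exact and the counting inequality $F_{\lambda^\star}(X) \leq L(X)$ holds for all $X$ with no error term to track. One should also confirm that the cited bound is stated for the range $X \geq 2$ (or otherwise adjust the constant to cover small $X$), but this is routine and does not affect the order of magnitude.
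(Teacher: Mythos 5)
Your proposal is correct and matches the paper's approach exactly: the paper deduces the bound in precisely this way, combining the corollary that every $\lambda^\star$-practical number is $\lambda$-practical with the upper bound for the count of $\lambda$-practical numbers from Proposition 5.1 of the cited work on $\lambda$-practical numbers. The set inclusion and the resulting comparison of counting functions are all that is needed.
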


Unfortunately, we have been unable to prove a reasonable lower bound for $F_{\lambda^\star}(X)$. It is not clear from our computations (see Tables \ref{table:lambdastar1} and \ref{table:lambdastar2}) whether $X/\log X$ is the correct order of magnitude for the $\lambda^\star$-practicals.


\begin{table}
\centering
\begin{minipage}{0.4\textwidth}
    \begin{tabular}{ | l | l | c |}
    \hline
    $X$ & $F_\lambda^\star(X)$ & $F_\lambda^\star(X)/(X/\log X)$ \\ \hline
    $10^1$ & 6 & 1.381551   \\
    $10^2$ & 28 & 1.289448 \\
    $10^3$ & 164 & 1.132872 \\
    $10^4$ & 1015 & 0.934850 \\
    $10^5$ & 7128 & 0.820641 \\
    $10^6$ & 52326 & 0.722910 \\
    $10^7$ & 409714 & 0.660381 \\
    \hline
    \end{tabular}
    \caption{Ratios for $\lambda^\star$-practicals}\label{table:lambdastar1}
    \end{minipage}
    \hspace{0.4 in}
    \begin{minipage}{.5\textwidth}
    \begin{tabular}{ | l | l | c | }
    \hline
    $X$ & $F_\lambda^\star(X)$ & $F_\lambda^\star(X)/(X/\log X)$\\ \hline
    $1 \cdot 10^6$ & 52326 & 0.722910\\
    $2 \cdot 10^6$ & 96667 & 0.701254\\
    $3 \cdot 10^6$ & 139139 & 0.691712\\
    $4 \cdot 10^6$ & 179854 & 0.683526\\
    $5 \cdot 10^6$ & 219598 & 0.677458\\
    $6 \cdot 10^6$ & 258656 & 0.672819\\
    $7 \cdot 10^6$ & 297202 & 0.669189\\
    $8 \cdot 10^6$ & 335181 & 0.665961\\
    $9 \cdot 10^6$ & 372779 & 0.663246\\
    $1 \cdot 10^7$ & 409714 & 0.660381\\
    \hline
    \end{tabular}\caption{A closer look at the range from $10^6$ to $10^7$}\label{table:lambdastar2}
\end{minipage}\hfill
\end{table}



\section{$f$-practicals for non-multiplicative $f$}\label{non-multiplicativef}

In this section, we remove the condition that $f$ is multiplicative and study the corresponding $f$-practical sets for several well-known non-multiplicative functions. 


\subsection{additive functions}
The naive criterion of Proposition \ref{thm:thesisA1} is of much better use for additive functions than for multiplicative functions. For the additive functions we want to consider, we require $f(p)\geq 1$ for every prime $p$.

\begin{lemma}\label{lem:naiveadditive}
Let $n=\prod_{i=1}^k p_i^{e_i}$ be a positive integer with prime $p_i$. Then $n$ is $f$-practical for an additive function $f$ if and only if 
\begin{align*}
    f(p_i^e) \leq 1 + \sum_{\substack {d|n\\f(d) < f(p_i^e)}} f(d)
\end{align*}
holds for every $1\leq i\leq k$ and $e\leq e_i$.
\end{lemma}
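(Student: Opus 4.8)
The plan is to deduce the lemma from the naive subset-sum criterion of Proposition~\ref{thm:thesisA1}, applied to the multiset of divisor values $\{f(d): d\mid n\}$. By the definition of $f$-practicality, $n$ is $f$-practical exactly when every integer in $[0,S_f(n)]$ occurs as a subset sum of these values. Since $f$ is additive we have $f(1)=0$, and more generally a divisor with $f(d)=0$ never affects a subset sum, so I would first discard the zero values and list the remaining positive values in increasing order as $w_1\leq w_2\leq\cdots$; their total is still $S_f(n)$. Proposition~\ref{thm:thesisA1} then states that $n$ is $f$-practical if and only if $w_{j+1}\leq 1+w_1+\dots+w_j$ for every $j$, where the representability of $1$ additionally forces $w_1=1$.

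Next I would repackage this chain of inequalities into a single condition indexed by the \emph{distinct} positive values rather than by position in the sorted list. Within a block of equal values the inequality is most stringent at the first occurrence, because enlarging the partial sum on the right only weakens it; and the first occurrence of a value $v$ is exactly the place where the running total equals $\sum_{d\mid n,\,f(d)<v}f(d)$. Hence the criterion of Proposition~\ref{thm:thesisA1} is equivalent to requiring
\begin{align*}
 v\leq 1+\sum_{\substack{d\mid n\\ f(d)<v}}f(d)
\end{align*}
for every value $v$ attained by $f$ on the divisors of $n$; equivalently, to the inequality $f(d)\leq 1+\sum_{d'\mid n,\,f(d')<f(d)}f(d')$ holding for \emph{every} divisor $d\mid n$, since two divisors with the same value impose the same constraint.

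The crux is to show that it suffices to check this inequality only at the prime-power divisors $p_i^{e}$, which is precisely the asserted condition. Here I would exploit additivity: for $d=\prod_\ell q_\ell^{a_\ell}$ one has $f(d)=\sum_\ell f(q_\ell^{a_\ell})$, and each $q_\ell^{a_\ell}$ is itself a divisor of $n$. If at least two of these prime-power components carry a positive value, then each has value strictly below the total $f(d)$, so they are distinct divisors appearing in the sum on the right, giving $\sum_{d'\mid n,\,f(d')<f(d)}f(d')\geq\sum_\ell f(q_\ell^{a_\ell})=f(d)$; the inequality then holds automatically, with no hypothesis required. If instead a single component $q^{a}$ carries all the value, then $f(d)=f(q^a)$ and the constraint for $d$ coincides with the one for the prime power $q^a$. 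Thus every divisor's constraint is vacuous, automatic, or identical to a prime-power constraint, and the full criterion collapses to the prime-power inequalities. I expect this reduction to be the main obstacle, largely as bookkeeping: one must handle components of value $0$ with care (an additive $f$ into $\N$ need not satisfy $f(p^a)\geq f(p)$), verify that the smallest positive value is always attained at a prime power so that the $w_1=1$ edge case is covered by a prime-power condition, and confirm that distinct prime-power components are genuinely distinct divisors of $n$. Assembling the three steps yields both directions of the lemma simultaneously.
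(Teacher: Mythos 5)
Your proposal is correct and follows essentially the same route as the paper: necessity comes directly from Proposition~\ref{thm:thesisA1}, and sufficiency comes from using additivity to show that any divisor which is not a prime power automatically satisfies the inequality because its prime-power components are themselves divisors of smaller $f$-value appearing in the right-hand sum. Your treatment is in fact slightly more careful than the paper's, which asserts $f(t)>f(p_i^{a_i})$ for all components of a non-prime-power divisor without separating out the case where some components have $f$-value $0$ (the paper's standing hypothesis only guarantees $f(p)\geq 1$ at primes, not at higher prime powers), whereas you handle that case explicitly.
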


\begin{proof}
It follows immediately from Proposition \ref{thm:thesisA1} that this inequality is necessary for $n$ to be $f$-practical. Now let $t = \prod_{i=1}^k p_i^{a_i}$ be a divisor of $n$. Since $f$ is additive we have $f(t) > f(p_i^{a_i})$ for every $i$ if $t$ is not a prime power, i.e., we must have $a_i>0$ for at least two different $i$. We thereby obtain
\begin{align*}
    f(t) = f(\prod_{i=1}^k p_i^{a_i}) = \sum_{i=1}^k f(p_i^{a_i}) \leq 1 + \sum_{\substack{d|n\\f(d) < f(t)}} f(d)
\end{align*}
which implies that $n$ is $f$-practical.
\end{proof}

\begin{cor}\label{cor:additiveconvenient}
For additive functions $f$, every integer $n$ is $f$-practical if and only if $f(p^k) \leq 1+\sum_{i=0}^{k-1} f(p^i)$ holds for every prime $p$ and any positive integer $i$.
\end{cor}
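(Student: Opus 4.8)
The plan is to deduce Corollary \ref{cor:additiveconvenient} from Lemma \ref{lem:naiveadditive}, which already gives a complete characterization of $f$-practicality for a \emph{single} integer $n$ in the additive setting. The statement ``every integer $n$ is $f$-practical'' should then reduce to checking the Lemma's condition simultaneously for all $n$, and the point is that the hardest instances to satisfy are the prime powers $p^k$ themselves. So first I would establish the easy direction: if every integer is $f$-practical, then in particular every prime power $p^k$ is $f$-practical, and applying Lemma \ref{lem:naiveadditive} to $n = p^k$ with $i=1$ and $e = k$ yields
\begin{align*}
    f(p^k) \leq 1 + \sum_{\substack{d \mid p^k \\ f(d) < f(p^k)}} f(d).
\end{align*}
Since $f$ is additive and $f(p)\geq 1$, the divisors of $p^k$ are exactly $1,p,\dots,p^k$ with $f(p^j) = j f(p)$ strictly increasing in $j$, so the divisors $d$ with $f(d) < f(p^k)$ are precisely $p^0,\dots,p^{k-1}$. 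This rewrites the inequality as $f(p^k) \leq 1 + \sum_{i=0}^{k-1} f(p^i)$, which is the claimed necessary condition.

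For the converse, I would assume $f(p^k) \leq 1 + \sum_{i=0}^{k-1} f(p^i)$ holds for every prime $p$ and every $k\geq 1$, and verify the condition of Lemma \ref{lem:naiveadditive} for an arbitrary $n = \prod_{i=1}^k p_i^{e_i}$. Fix a prime power $p_i^e$ dividing $n$ with $e \leq e_i$. The key observation is that among the divisors $d$ of $n$ with $f(d) < f(p_i^e)$, we already find the ``pure'' powers $p_i^0 = 1, p_i^1, \dots, p_i^{e-1}$, since $f(p_i^j) = j f(p_i) < e f(p_i) = f(p_i^e)$ for each $j < e$. Summing only over these divisors gives
\begin{align*}
    1 + \sum_{\substack{d \mid n \\ f(d) < f(p_i^e)}} f(d) \;\geq\; 1 + \sum_{j=0}^{e-1} f(p_i^j) \;\geq\; f(p_i^e),
\end{align*}
where the final inequality is exactly the hypothesis applied to $p_i$ with exponent $e$. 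Thus the condition of Lemma \ref{lem:naiveadditive} is satisfied for every $i$ and every $e \leq e_i$, so $n$ is $f$-practical; since $n$ was arbitrary, every integer is $f$-practical.

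The argument is essentially a bookkeeping reduction, so I do not anticipate a serious obstacle, but the one point requiring care is the step where I \emph{throw away} all the mixed divisors of $n$ and keep only the pure powers of the single prime $p_i$ under consideration. I need to be sure that $f(p_i^j) < f(p_i^e)$ for $j < e$, which relies on $f$ being additive together with the standing assumption $f(p) \geq 1$ (so that $f(p_i^j) = j\,f(p_i)$ is strictly increasing); without $f(p_i) \geq 1$ the powers could fail to be distinct under $f$ and the reduction would break. I would flag that this is precisely where the hypothesis $f(p)\geq 1$ is used, mirroring its role in the proof of Lemma \ref{lem:naiveadditive}. Everything else is a direct substitution into the Lemma, and the equivalence follows.
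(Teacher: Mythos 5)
The reduction to Lemma \ref{lem:naiveadditive} is the right strategy (the paper states this corollary without a written proof, as an immediate consequence of that lemma), but your argument contains a genuine gap: you use the identity $f(p^j) = j\,f(p)$, which holds only for \emph{completely} additive functions. An additive function need only satisfy $f(ab) = f(a)+f(b)$ for \emph{coprime} $a,b$, so the values $f(p^0), f(p^1), f(p^2), \dots$ at powers of a fixed prime are essentially unconstrained and in particular need not be strictly increasing. The paper's own headline application of this corollary is $f=\omega$, for which $\omega(p^j)=1$ for every $j\geq 1$; there the divisors $p^1,\dots,p^{e-1}$ do \emph{not} satisfy $f(p^j)<f(p^e)$, so your displayed inequality
\begin{align*}
1 + \sum_{\substack{d \mid n \\ f(d) < f(p_i^e)}} f(d) \;\geq\; 1 + \sum_{j=0}^{e-1} f(p_i^j)
\end{align*}
is simply false (for $\omega$ the left side equals $1$ while the right side equals $e$). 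The necessity direction survives, because there the containment you actually need goes the other way: $\{d \mid p^k : f(d) < f(p^k)\} \subseteq \{1,p,\dots,p^{k-1}\}$ together with $f\geq 0$ bounds the lemma's sum above by $\sum_{i=0}^{k-1} f(p^i)$. But the sufficiency direction, as written, collapses exactly at the step you flagged, and the hypothesis $f(p)\geq 1$ does not rescue it.

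The sufficiency direction can be repaired without any monotonicity. Given $n$ and a prime power $p^e$ dividing $n$ (with $e$ at most the exponent of $p$ in $n$), let $j$ be the least exponent with $f(p^j) \geq f(p^e)$, so $j \leq e$. By minimality every $i<j$ satisfies $f(p^i) < f(p^e)$, and each such $p^i$ divides $n$; hence
\begin{align*}
f(p^e) \;\leq\; f(p^j) \;\leq\; 1 + \sum_{i=0}^{j-1} f(p^i) \;\leq\; 1 + \sum_{\substack{d \mid n \\ f(d) < f(p^e)}} f(d),
\end{align*}
where the middle inequality is the corollary's hypothesis applied at exponent $j$ and the last uses non-negativity of $f$. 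This verifies the condition of Lemma \ref{lem:naiveadditive} for every $n$. Your version of the argument is the special case $j=e$, which cannot be assumed for a general additive $f$.
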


\begin{remark} Let $\omega(n)$ denote the number of distinct prime factors of an integer $n$, and let $\Omega(n)$ denote the number of prime factors of $n$ with multiplicity. Both functions are additive but not multiplicative. Corollary \ref{cor:additiveconvenient} shows that every positive integer $n$ is $\omega$-practical and $\Omega$-practical. \end{remark}

\begin{remark} Let $f = v_p(n)$, the exact power of $p$ dividing $n$. The fact that the set of $f$-practicals encompasses all natural numbers follows from Corollary \ref{cor:additiveconvenient}. One can also prove that all natural numbers are $v_p$-practical via a simple combinatorial argument: we can write $$S_{v_p}(n) = \frac{{v_p(n)}(v_p(n) + 1)}{2} \cdot \tau\left(\frac{n}{p^{v_p(n)}}\right),$$ where $\frac{{v_p(n)}(v_p(n) + 1)}{2} = 1 + 2 + \cdots + v_p(n)$ is the sum of all of the valuations at powers of $p$ and $\tau(\frac{n}{p^{v_p(n)}})$ represents the number of identical copies of the valuations $1,2,...,v_p(n)$, which come from multiplying the powers of $p$ by each of the divisors of $n$ that are coprime to $p$. Every integer $m$ in the interval $[1, S_{v_p}(n)]$ can be represented as $m = v_p(n) q + r$ for some $r, q$ satisfying $0 \leq r < v_p(n)$ and $0 \leq q \leq \tau(n/p^{v_p(n)})$. \end{remark}

As the next example demonstrates, there are also some natural examples of additive functions for which the set of $f$-practicals does not coincide with the full set of natural numbers.

\begin{example}
Let $a_1(n) = \sum_{p|n} p$, the sum of the distinct primes dividing $n$. For every $n>1$ and every $1<d|n$ there is a prime $p>1$ which divides $d$. Therefore we have $a_1(d) \geq p >1$. Hence the number $1< a_1(n) \leq S_{a_1}(n)$ is not representable. In particular, this shows that $1$ is the only $a_1$-practical number. 
\end{example}
\subsection{Functions which are neither additive nor multiplicative}

We can also define $f$-practical numbers for functions $f$ which are neither multiplicative nor additive. One such example is the sum-of-proper-divisors function, which is defined as follows:

\begin{definition} Let $s: \N \to \N$ be given by $s(n)= \sigma(n) - n.$\end{definition}

The function $s$ is used in the study of perfect numbers. Namely, if $s(n) = n$ then $n$ is perfect. If $s(n) > n$, we say that $n$ is abundant. Since $s$ is an arithmetic function, we can use the $f$-practical definition to define $s$-practical numbers. We begin by demonstrating that there are infinitely many $s$-practical numbers. To show this, we will need the following lemma.

\begin{lemma}\label{lem:calcs}
For two coprime integers $a,b$ we have $$s(ab) = s(a)s(b) + as(b) + bs(a).$$ 
\end{lemma}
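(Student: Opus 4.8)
The plan is to prove the identity $s(ab) = s(a)s(b) + as(b) + bs(a)$ for coprime $a, b$ by reducing it to the well-known multiplicativity of $\sigma$. Since $\gcd(a,b) = 1$, we have $\sigma(ab) = \sigma(a)\sigma(b)$, and the definition $s(n) = \sigma(n) - n$ lets us translate everything into a statement purely about $\sigma$. So the first step is to write $s(ab) = \sigma(ab) - ab = \sigma(a)\sigma(b) - ab$.

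Next I would substitute $\sigma(a) = s(a) + a$ and $\sigma(b) = s(b) + b$ into the product $\sigma(a)\sigma(b)$. Expanding $(s(a) + a)(s(b) + b)$ gives
\begin{align*}
    \sigma(a)\sigma(b) = s(a)s(b) + a\,s(b) + b\,s(a) + ab.
\end{align*}
Subtracting $ab$ from both sides then yields $s(ab) = s(a)s(b) + a\,s(b) + b\,s(a)$, which is exactly the claimed identity.

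This is essentially a one-line algebraic verification, so there is no serious obstacle; the only thing to be careful about is invoking multiplicativity of $\sigma$, which requires the coprimality hypothesis $\gcd(a,b) = 1$ and is the sole place that assumption is used. The identity also makes transparent why $s$ itself fails to be multiplicative: the cross terms $a\,s(b) + b\,s(a)$ do not vanish in general, so $s(ab) \neq s(a)s(b)$, consistent with the paper's classification of $s$ as neither additive nor multiplicative.
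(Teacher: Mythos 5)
Your proof is correct and follows exactly the same route as the paper's: write $s(ab) = \sigma(ab) - ab$, invoke multiplicativity of $\sigma$, substitute $\sigma(a) = s(a)+a$ and $\sigma(b) = s(b)+b$, and expand. No differences worth noting.
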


\begin{proof} Observe that
\begin{align*} 
    s(ab) &= \sigma(ab) -ab = \sigma(a)\sigma(b) -ab = (s(a)+a)(s(b) +b) -ab \\
    &= s(a)s(b) + as(b) + bs(a).
\end{align*}
\end{proof}

\begin{thm} There are infinitely many $s$-practical numbers.\end{thm}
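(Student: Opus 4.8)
The plan is to exhibit the explicit infinite family $n_k = 3\cdot 2^k$ with $k \geq 4$ and to show that each $n_k$ is $s$-practical by a direct appeal to Proposition~\ref{thm:thesisA1}. First I would record the $s$-values on the relevant divisors. From $\sigma(2^j) = 2^{j+1}-1$ we get $s(2^j) = 2^j - 1$, and applying Lemma~\ref{lem:calcs} to the coprime factorization $3\cdot 2^j$ together with $s(3)=1$ yields $s(3\cdot 2^j) = s(3)s(2^j) + 3\,s(2^j) + 2^j\,s(3) = 5\cdot 2^j - 4$. Writing $a_j = 2^j-1$ and $b_j = 5\cdot 2^j - 4$, the multiset of values $\{s(d) : d \mid n_k\}$ is then exactly $\{a_j : 0 \le j \le k\} \cup \{b_j : 0 \le j \le k\}$.

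By definition $n_k$ is $s$-practical precisely when every integer in $[1, S_s(n_k)]$ is a subset sum of these values, and by Proposition~\ref{thm:thesisA1} this is equivalent to the sorted list $w_1 \le \dots \le w_t$ of the positive values satisfying $w_{i+1} \le 1 + w_1 + \cdots + w_i$ for every $i$. The core of the argument is to verify this sorted condition, which I would establish by showing that for each value $v$ in the list the sum of all strictly smaller values is at least $v-1$. The two geometric sequences $a_j$ and $b_j$ can be compared term by term, and the internal values carry a large amount of slack; the single binding constraint comes from the largest value $b_k = 5\cdot 2^k - 4$, whose smaller values sum to $7\cdot 2^k - 5k - 7$, so that the required inequality reduces to $2^{k+1} \ge 5k + 2$. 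This holds exactly when $k \ge 4$, which also explains why $24 = 3\cdot 2^3$ fails to be $s$-practical while $48 = 3\cdot 2^4$ succeeds. Since the family $\{n_k\}_{k \ge 4}$ is infinite, the theorem follows.

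The main obstacle is the bookkeeping in the sorted subset-sum check, caused by the irregular interleaving of the two sequences near the small end (for instance $b_1 = 6 < a_3 = 7$, while $b_j$ lies between $a_{j+2}$ and $a_{j+3}$ for $j \ge 2$). I would manage this not by literally sorting, but by the value-by-value lower bound above: for each $a_j$ and each $b_j$ I would bound the sum of the smaller values from below using only the two closed-form geometric sums, reducing the whole verification to a short list of elementary inequalities in $j$ and $k$, with the top value $b_k$ supplying the threshold $k \ge 4$. An alternative, slightly longer route is induction on $k$, inserting the two new values $a_{k+1}$ and $b_{k+1}$ into the list and using the fact that inserting elements only raises later prefix sums; but the direct estimate seems cleanest.
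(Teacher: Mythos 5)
Your argument is correct in outline and your key computations check out ($s(3\cdot 2^j)=5\cdot 2^j-4$, the complementary sum $7\cdot 2^k-5k-7$ below the top value $b_k$, and the resulting threshold $2^{k+1}\ge 5k+2$), but it is a genuinely different --- and vastly heavier --- route than the paper's. The paper's proof is one line: for any prime $p$ we have $S_s(p)=s(1)+s(p)=0+1=1$, so the only integer that needs to be represented is $1=s(p)$; hence \emph{every prime} is $s$-practical and infinitude follows from Euclid. (Amusingly, the paper's source contains your family $3\cdot 2^k$, $k\ge 4$, in a commented-out block, so the authors evidently started down your road before noticing the trivial observation.) What your approach buys is a more substantive conclusion: it exhibits infinitely many \emph{composite} $s$-practical numbers with many divisors, which the prime argument says nothing about, and it pins down exactly where the family $3\cdot 2^k$ starts working. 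What it costs is the bookkeeping you describe: you must verify the prefix-sum condition of Proposition~\ref{thm:thesisA1} for every value in the interleaved list, not just the top one, and while the internal constraints do all hold for $k\ge 4$ (two of them, at $a_2=3$ and $b_1=6$, hold with equality, so there is no slack to waste), writing this out cleanly takes real effort. One small inaccuracy: $2^{k+1}\ge 5k+2$ also holds at $k=0$ (and indeed $3$ is $s$-practical, being prime), so ``holds exactly when $k\ge 4$'' should be ``fails for $1\le k\le 3$ and holds for $k\ge 4$.'' If you want the theorem with minimal work, use the primes; your construction is better saved for a remark showing the $s$-practicals are not merely the primes.
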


\begin{proof} 
Every prime is $s$-practical, since we have 
\begin{align*}
    S_s(p) = \sum_{d|p}s(d) = s(1) + s(p) = 0 + 1= 1
\end{align*}
for prime $p$.
\end{proof}

The function $s$ is not multiplicative, which prevents us from using the machinery developed in previous sections to prove upper and lower bounds for the count of $s$-practical numbers. However, it is still possible to show that the $s$-practical numbers arising from integers $n$ with $n \leq 2S_s(n)$ have asymptotic density $0$. To prove this, we will follow an argument that was used by the second author in \cite{Thompson} to show that the $\varphi$-practical numbers have asymptotic density $0$. We note that Erd\H{o}s \cite{Erdos} was the first to claim that the practical numbers have asymptotic density $0$. Although he did not write down a proof, it is likely that he had a similar argument in mind. 

\begin{thm}\label{thm:abundantdensity}
If $n^{1/2} \leq S_s(n)$ then the $s$-practicals have asymptotic density $0$.
\end{thm}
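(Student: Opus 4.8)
The plan is to reduce $s$-practicality to a Stewart-like chain condition on the prime factors of $n$ and then to count, following the argument of \cite{Thompson} (itself in the spirit of \cite{Erdos}). First note that the hypothesis $n^{1/2}\le S_s(n)$ merely guarantees that $n$ is composite: every prime $p$ has $S_s(p)=1<\sqrt p$, whereas every composite $n$ has a prime factor $q\le\sqrt n$, so that $S_s(n)\ge s(n)\ge n/q\ge\sqrt n$. Since the primes already have density $0$ by the prime number theorem, it suffices to show that the composite $s$-practical numbers are sparse.

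Next I would extract a necessary chain condition. Write $n=p_1^{e_1}\cdots p_k^{e_k}$ with $p_1<\cdots<p_k$ and set $m_i=p_1^{e_1}\cdots p_i^{e_i}$. Using Lemma \ref{lem:calcs} one sees that $s(p)=1$ for every prime $p\mid n$, while every \emph{composite} divisor $d$ of $n$ that is divisible by some prime $\ge p_{i+1}$ satisfies $s(d)\ge 1+p_{i+1}$ (such a $d$ has $p_{i+1}$, or a larger prime, as a proper divisor, so $\sigma(d)\ge d+p_{i+1}+1$). Feeding the sorted list $\{s(d):d\mid n\}$ into the naive criterion of Proposition \ref{thm:thesisA1} at the point where the first such composite value appears forces, for each $i<k$, a bound of the shape $p_{i+1}\le S_s(m_i)+(\omega(n)-i)$: the integers up to $S_s(m_i)+(\omega(n)-i)$ are exactly what the divisors of $m_i$ together with the remaining prime factors $p_{i+1},\dots,p_k$ (each of value $1$) can produce, and representing one more unit requires a divisor of value $\ge 1+p_{i+1}$. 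This is the analogue of Stewart's criterion, and it is precisely here that the non-multiplicativity of $s$ must be handled by hand via Lemma \ref{lem:calcs}, since the weakly-$f$-practical machinery of Section \ref{f-practicalcriterion} does not apply.

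I would then convert this into a genuine growth restriction. Using $S_s(m)=\sum_{d\mid m}\sigma(d)-\sigma(m)\le\tau(m)\sigma(m)\ll m^{1+o(1)}$ (valid since $d\mid m$ implies $\sigma(d)\le\sigma(m)$), one obtains the robust bound $p_{i+1}\le m_i^{\,1+o(1)}$ for every $i$; in the exceptional case where $m_i$ itself carries a divisor of $s$-value $\ge 1+p_{i+1}$ this bound is immediate, since then $p_{i+1}\ll m_i^{\,1+o(1)}$ directly. In particular the top instance gives $P(n)\le n^{1/2+o(1)}$. Finally I would run the Erd\H{o}s--Thompson counting: peel off the largest prime factor $p=P(n)$, write $n=p^{a}m$ with $P(m)<p$, and use the chain bound to confine $p$ to the range $P(m)<p\le m^{1+o(1)}$, which is short on a logarithmic scale (it forces $\log p\approx\log m$, so the two largest blocks of $n$ are comparable). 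Summing the number of admissible $p$ for each $m$ via the prime number theorem, iterating the recursion over the remaining prime factors, and controlling the accumulated constants by Mertens' estimates, should yield a count that is $o(X)$, hence density $0$.

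The main obstacle is the counting step rather than the chain condition. The bound $P(n)\le n^{1/2+o(1)}$ alone is insufficient, because the integers with $P(n)\le\sqrt n$ already have positive density ($1-\log 2$), and one checks that the auxiliary condition $n^{1/2}\le S_s(n)$ is likewise satisfied by a positive proportion of all $n$ (on average $S_s(n)$ is a constant multiple of $n$). What rescues the argument is that the chain condition $p_{i+1}\le m_i^{\,1+o(1)}$ holds at \emph{every} level simultaneously, pinning the large prime factors into short logarithmic windows; making this sparsity quantitative --- rather than merely bounding $P(n)$ --- is the delicate part, and is exactly what the method of \cite{Thompson} is designed to supply.
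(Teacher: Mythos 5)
Your proposal takes a completely different route from the paper, and it contains a genuine gap: the entire argument funnels into the final counting step (``summing the number of admissible $p$ for each $m$ \dots should yield a count that is $o(X)$''), which you acknowledge is ``the delicate part'' and then do not carry out. Asserting that the chain condition $p_{i+1}\le m_i^{1+o(1)}$ at every level forces density $0$ is not a proof; the analogous statement for practical numbers (Stewart's chain with $p_{i+1}\le\sigma(m_i)+1$) is a real theorem requiring the Saias--Tenenbaum machinery or at least a careful Erd\H{o}s-style iteration, and your weaker, $o(1)$-laden version would need the same work done explicitly. There are also smaller soft spots along the way (Proposition \ref{thm:thesisA1} requires positive weights while $s(1)=0$; the derivation of the chain inequality has to juggle composite divisors of $m_i$ whose $s$-values exceed $p_{i+1}$, which you wave off into an ``exceptional case''), but the unexecuted counting is the fatal one.

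The paper's proof needs none of this structure. It is a pigeonhole argument: an $s$-practical $n$ has at most $2^{\tau(n)-1}$ distinct subset sums of $\{s(d):d\mid n\}$ (the factor $\tfrac12$ because $s(1)=0$), and these must cover every integer in $[1,S_s(n)]$, so $S_s(n)\le 2^{\tau(n)-1}$. Combined with the hypothesis $n^{1/2}\le S_s(n)$ this forces $\tfrac12\log n<\tau(n)$, whereas $\tau(n)\le 2^{\Omega(n)}\le(\log n)^{0.7}$ for all $n$ outside a set of density $0$ by the normal order of $\Omega(n)$. Since $\tfrac12\log n>(\log n)^{0.7}$ for all large $n$, the $s$-practical $n$ with $n^{1/2}\le S_s(n)$ lie in a density-$0$ set. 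If you want to salvage your approach you must actually execute the recursion over the prime factorization with explicit Mertens-type bounds; as written, the proposal reduces the theorem to an unproved counting claim.
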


\begin{proof}
We have $ \tau(n)\leq 2^{\Omega(n)}$. Because  $\Omega(n)$ has normal order $\log\log n$, for all $n$ except a set with asymptotic density $0$ we have
\begin{align*}
    \tau(n)\leq 2^{\Omega(n)} \leq 2^{(1+\epsilon)\log\log(n)} = (\log n)^{(1+\epsilon)\log 2} \leq (\log n)^{0.7}
\end{align*}
if we fix $\epsilon=1/1000$. But for every $s$-practical $n$ it has to be the case that $ S_s(n)\leq 2^{\tau(n)-1}$ since there are at most $2^{\tau(n)}$ different numbers which can be represented as the sum of $s(d)$'s where the $d$'s are some of the $\tau(n)$ divisors of $n$ and each number between $1$ and $S_s(n)$ has to be representable as such a sum in order for $n$ to be $s$-practical. Because we have $s(1)=0$, half of these possible sums coincide, as there is no difference between the sums with and without $s(1)$. From the hypothesis $n^{1/2} \leq S_s(n)$, we hence obtain
\begin{align*}
    \frac{1}{2} \log n \leq \log S_s(n)\leq \tau(n)\log 2 < \tau(n) \leq (\log n)^{0.7}.
\end{align*}
But for all $n \geq e^{8\sqrt[3]{2}}$, we have
\begin{align*}
    \frac 12(\log n)^{0.3} \geq \frac12 \left(\sqrt[3]{1024}\right)^{0.3} =  \frac 12 \cdot 1024^{0.1} = 1.
\end{align*}
So for almost all $n$ the inequality $\frac12\log n \geq (\log n)^{0.7}$ holds. Therefore the set of $s$-practical values of $n$ with $n^{1/2} \leq S_s(n)$ has asymptotic density 0.
\end{proof}

\begin{lemma}\label{lemma:density1ineq} The inequality $n^{1/2} \leq S_s(n)$ holds for almost all $n$. \end{lemma}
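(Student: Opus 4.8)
The plan is to prove the inequality for \emph{every composite} $n$, and then to note that the only possible exceptions, namely $n=1$ and the primes, form a set of asymptotic density $0$. The guiding observation is that no delicate information about the typical size of $\sigma(n)-n$ is required: a single well-chosen divisor already forces $S_s(n)$ past $\sqrt n$. As a first reduction, since $s(d)=\sigma(d)-d\geq 0$ for every $d$ (with $s(1)=0$), every term of $S_s(n)=\sum_{d\mid n}s(d)$ is nonnegative, so in particular
\begin{align*}
    S_s(n)\geq s(n)=\sigma(n)-n.
\end{align*}
It therefore suffices to bound $\sigma(n)-n$ from below, i.e.\ to exhibit one large proper divisor of $n$.

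The key step uses the least prime factor. Suppose $n$ is composite and let $p$ be its smallest prime factor; then the standard fact $p\leq n^{1/2}$ holds. The integer $n/p$ is a proper divisor of $n$, since $1<n/p<n$, so it appears among the summands of $s(n)$, giving
\begin{align*}
    s(n)\geq \frac{n}{p}\geq \frac{n}{n^{1/2}}=n^{1/2}.
\end{align*}
Combining this with the previous display yields $S_s(n)\geq n^{1/2}$ for every composite $n$.

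Finally I would dispose of the exceptional set. The inequality can fail only for $n=1$ (where $S_s(1)=0$) and for $n$ prime (where $S_s(p)=s(1)+s(p)=1<p^{1/2}$). By the prime number theorem—indeed already by Chebyshev's upper bound—one has $\pi(X)=o(X)$, so $\{1\}\cup\{\,p:\ p \text{ prime}\,\}$ has asymptotic density $0$. Hence $n^{1/2}\leq S_s(n)$ holds for all $n$ outside a density-zero set, as claimed.

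The ``main obstacle'' here is conceptual rather than computational: the temptation is to study the distribution of $\sigma(n)/n$, but this is unnecessary. The crux is to recognize that the crude bound $S_s(n)\geq s(n)\geq n/p$ coming from the single divisor $n/p$ is already sufficient once $p\leq\sqrt n$, which is automatic for composite $n$; the only analytic input then needed is the negligibility of the primes.
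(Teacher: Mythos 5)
Your proof is correct and follows exactly the paper's argument: for composite $n$ the least prime factor $p$ satisfies $p\leq n^{1/2}$, so $n/p$ is a proper divisor of size at least $n^{1/2}$, giving $S_s(n)\geq s(n)\geq n/p\geq n^{1/2}$, and the exceptional set of primes (and $1$) has density zero. The only difference is cosmetic: you spell out the exceptional cases $n=1$ and $n$ prime explicitly, which the paper leaves implicit.
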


\begin{proof} If $n$ is composite, its least prime factor $p$ satisfies $p \leq n^{1/2}$, so $n/p$ is a proper divisor of $n$ that is $\geq n^{1/2}$. Thus, we have $$S_s(n) \geq s(n) \geq \frac{n}{p} \geq n^{1/2}.$$ Since the set of composite numbers has asymptotic density $1$, it follows that $S_s(n) \geq n^{1/2}$ holds for almost all $n$. \end{proof}

We can deduce the following corollary from Theorem \ref{thm:abundantdensity} and Lemma \ref{lemma:density1ineq}.

\begin{cor} The set of $s$-practical numbers has asymptotic density $0$. \end{cor}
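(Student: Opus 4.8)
The plan is to combine the two results that immediately precede this corollary, since the corollary is stated as a direct consequence of Theorem \ref{thm:abundantdensity} together with Lemma \ref{lemma:density1ineq}. The key observation is that these two results dovetail exactly: Lemma \ref{lemma:density1ineq} tells us that the hypothesis $n^{1/2} \leq S_s(n)$ of Theorem \ref{thm:abundantdensity} is satisfied for almost all $n$, i.e., outside a set of asymptotic density $0$, while Theorem \ref{thm:abundantdensity} tells us that among the $n$ satisfying that hypothesis, the $s$-practical ones form a set of density $0$.

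First I would invoke Lemma \ref{lemma:density1ineq} to write $\N = A \cup B$, where $A = \{n : n^{1/2} \leq S_s(n)\}$ and $B$ is its complement, and note that $B$ has asymptotic density $0$. Next I would apply Theorem \ref{thm:abundantdensity}, which asserts precisely that the set of $s$-practical $n$ lying in $A$ has asymptotic density $0$. It then remains to bound the full count of $s$-practical numbers up to $X$ by splitting according to whether $n \in A$ or $n \in B$:
\begin{align*}
    \#\{n \leq X : n \text{ is } s\text{-practical}\} &\leq \#\{n \leq X : n \in A,\ n \text{ is } s\text{-practical}\} + \#\{n \leq X : n \in B\}.
\end{align*}
The first term on the right is $o(X)$ by Theorem \ref{thm:abundantdensity}, and the second term is $o(X)$ by Lemma \ref{lemma:density1ineq}. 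Dividing through by $X$ and letting $X \to \infty$ yields that the density of the $s$-practical numbers is $0$, as claimed.

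This proof is essentially a two-line deduction, so there is no real obstacle to overcome; the only point requiring a modicum of care is the elementary fact that a finite union of density-zero sets again has density zero, which is what legitimizes adding the two error terms above. Since all the substantive work has already been carried out in establishing Theorem \ref{thm:abundantdensity} and Lemma \ref{lemma:density1ineq}, the corollary follows immediately.
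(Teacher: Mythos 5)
Your proposal is correct and matches the paper's intended argument: the paper gives no written proof, simply stating that the corollary follows from Theorem \ref{thm:abundantdensity} and Lemma \ref{lemma:density1ineq}, and your decomposition of the $s$-practical numbers according to whether $n^{1/2} \leq S_s(n)$ holds is exactly the deduction intended. The only point worth noting is that Theorem \ref{thm:abundantdensity}, despite its phrasing, is proved in the form you use it (the set of $s$-practical $n$ satisfying the hypothesis has density $0$), so your reading is the right one.
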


\section*{Acknowledgements}

This research was initiated when the first author was a student in the intern program at the Max-Planck-Institut f\"{u}r Mathematik and while the second author was a visiting researcher there. Both authors would like to thank the Max-Planck-Institut f\"{u}r Mathematik for making this collaboration possible. Portions of this work were completed while the second author was in residence at the Mathematical Sciences Research Institute, during which time she was supported by the National Science Foundation under Grant No. DMS-1440140. The second author is also supported by an AMS Simons Travel Grant. Both authors are grateful to Carl Pomerance for posing the question answered in Theorem \ref{thm:densitiesaredense} and for suggesting the generalized version of Theorem \ref{sortedlist} that is presented in this paper. The authors are also grateful to Paul  Pollack for helpful comments which led to an improvement in their computation of the asymptotic density of $s$-practicals. 


\providecommand{\bysame}{\leavevmode\hbox
to3em{\hrulefill}\thinspace}
\providecommand{\MR}{\relax\ifhmode\unskip\space\fi MR }
\providecommand{\nMRhref}[2]{%
  \href{http://www.ams.org/mathscinet-getitem?mr=#1}{#2}
} \providecommand{\href}[2]{#2}

\end{document}